\newtheorem{theorem}{Theorem}[section]
\newtheorem{lemma}[theorem]{Lemma}
\newtheorem{definition}{Definition}[section]
\newcommand{\diag}{\textnormal{diag}}
\newcommand{\be}{\begin{equation}}
\newcommand{\ee}{\end{equation}}
\newcommand{\ba}{\begin{array}}
\newcommand{\ea}{\end{array}}
\newcommand{\bpm}{\begin{pmatrix}}
\newcommand{\epm}{\end{pmatrix}}
\newcommand{\bea}{\begin{eqnarray}}
\newcommand{\eea}{\end{eqnarray}}
\newcommand{\beaa}{\begin{eqnarray*}}
\newcommand{\eeaa}{\end{eqnarray*}}
\newcommand{\bal}{\begin{align}}
\newcommand{\eal}{\end{align}}
\newcommand{\baln}{\begin{align*}}
\newcommand{\ealn}{\end{align*}}
\newcommand{\bm}[1]{\boldsymbol{#1}}
\newcommand{\va}{{\mathbf{a}}}
\newcommand{\vb}{{\mathbf{b}}}
\newcommand{\vc}{{\mathbf{c}}}
\newcommand{\ve}{{\mathbf{e}}}
\newcommand{\vq}{{\mathbf{q}}}
\newcommand{\vr}{{\mathbf{r}}}
\newcommand{\vv}{{\mathbf{v}}}
\newcommand{\vx}{{\mathbf{x}}}
\newcommand{\vy}{{\mathbf{y}}}
\newcommand{\vA}{{\mathbf{A}}}
\newcommand{\vB}{{\mathbf{B}}}
\newcommand{\vH}{{\mathbf{H}}}
\newcommand{\vI}{{\mathbf{I}}}
\newcommand{\vL}{{\mathbf{L}}}
\newcommand{\vP}{{\mathbf{P}}}
\newcommand{\vQ}{{\mathbf{Q}}}
\newcommand{\vU}{{\mathbf{U}}}
\newcommand{\vX}{{\mathbf{X}}}
\newcommand{\vlam}{{\bm{\lambda}}}
\newcommand{\vtheta}{{\bm{\theta}}}
\newcommand{\cL}{{\mathcal{L}}}
\newcommand{\EE}{\mathbb{E}} % expectation
\newcommand{\RR}{\mathbb{R}} % real
\newcommand{\vzero}{\mathbf{0}} % 0 vector
\newcommand{\Prob}{{\mathrm{Prob}}} % probability
\newcommand{\prox}{{\mathbf{prox}}} % proximal map
\newcommand{\Diag}{{\mathrm{Diag}}} % vector -> diagonal matrix
\newcommand{\st}{\mbox{ s.t. }}
\DeclareMathOperator*{\argmin}{arg\,min} % argmin
\newcommand{\bc}{\begin{center}}
\newcommand{\ec}{\end{center}}
\newcommand{\bdm}{\begin{displaymath}}
\newcommand{\edm}{\end{displaymath}}
\newcommand{\beq}{\begin{equation}}
\newcommand{\eeq}{\end{equation}}
\newcommand{\bfl}{\begin{flushleft}}
\newcommand{\efl}{\end{flushleft}}
\newcommand{\bt}{\begin{tabbing}}
\newcommand{\et}{\end{tabbing}}
\newcommand{\beqn}{\begin{eqnarray}}
\newcommand{\eeqn}{\end{eqnarray}}
\newcommand{\beqs}{\begin{align*}} % no equation numbers
\newcommand{\eeqs}{\end{align*}}  % no equation numbers
\newtheorem{remark}{Remark}[section]
\newtheorem{assumption}{Assumption}
\begin{document}

\title{Asynchronous parallel primal-dual block coordinate update methods for affinely constrained convex programs\thanks{This work is partly supported by NSF grant DMS-1719549.}}

\author{Yangyang Xu\thanks{\url{xuy21@rpi.edu}. Department of Mathematical Sciences, Rensselaer Polytechnic Institute, Troy, NY}
}

\date{}

\maketitle

\begin{abstract}

Recent several years have witnessed the surge of asynchronous (async-) parallel computing methods due to the extremely big data involved in many modern applications and also the advancement of multi-core machines and computer clusters. In optimization, most works about async-parallel methods are on unconstrained problems or those with block separable constraints.

In this paper, we propose an async-parallel method based on block coordinate update (BCU) for solving convex problems with \emph{nonseparable} linear constraint. Running on a single node, the method becomes a novel randomized primal-dual BCU for multi-block affinely constrained problems. For these problems, Gauss-Seidel cyclic primal-dual BCU is not guaranteed to converge to an optimal solution if no additional assumptions, such as strong convexity, are made. On the contrary, assuming convexity and existence of a primal-dual solution, we show that the objective value sequence generated by the proposed algorithm converges \emph{in probability} to the optimal value and also the constraint residual to zero. In addition, we establish an ergodic $O(1/k)$ convergence result, where $k$ is the number of iterations. Numerical experiments are performed to demonstrate the efficiency of the proposed method and significantly better speed-up performance than its sync-parallel counterpart.

\vspace{0.3cm}

\noindent {\bf Keywords:} asynchronous parallel, block coordinate update, primal-dual method
\vspace{0.3cm}

\noindent {\bf Mathematics Subject Classification:} 90C06, 90C25, 68W40, 49M27.

\end{abstract}

\section{Introduction}
Modern applications in various data sciences and engineering can involve huge amount of data and/or variables \cite{WH2014big}. Driven by these very large-scale problems and also the advancement of multi-core computers, parallel computing has gained tremendous attention in recent years. 
In this paper, we consider the affinely constrained multi-block structured problem:
\begin{equation}\label{eq:mb-prob}
\min_\vx f(\vx_1,\ldots,\vx_m)+\sum_{i=1}^m g_i(\vx_i), \st \sum_{i=1}^m \vA_i\vx_i=\vb,
\end{equation}
where the variable $\vx$ is partitioned into multiple disjoint blocks $\vx_1,\ldots,\vx_m$, $f$ is a continuously differentiable and convex function, and each $g_i$ is a lower semi-continuous extended-valued convex but possibly non-differentiable function. Besides the nonseparable affine constraint, \eqref{eq:mb-prob} can also include certain block separable constraint by letting part of $g_i$ be an indicator function of a convex set, e.g., nonnegativity constraint. 

We will present a novel asynchronous (async-) parallel primal-dual method (see Algorithm \ref{alg:async-rpdc}) towards finding a solution to \eqref{eq:mb-prob}. Suppose there are multiple nodes (or cores, CPUs). We let one node (called \emph{master node}) update both primal and dual variables and all the remaining ones (called \emph{worker nodes}) compute and provide block gradients of $f$ to the master node. We assume each $g_i$ is proximable (see the definition in \eqref{eq:prox} below). When there is a single node, our method reduces to a novel serial primal-dual BCU for solving \eqref{eq:mb-prob}; see Algorithm \ref{alg:rpdc}. 

\subsection{Motivating examples}
Problems in the form of \eqref{eq:mb-prob} arise in many areas including signal processing, machine learning, finance, and statistics. For example, the basis pursuit problem \cite{chen2001atomic} seeks a sparse solution on an affine subspace through solving the linearly constrained program:
\begin{equation}\label{eq:bp}
\min_{\vx} \|\vx\|_1, \st \vA\vx = \vb.
\end{equation}
Partitioning $\vx$ into multiple disjoint blocks in an arbitrary way, one can formulate \eqref{eq:bp} into the form of \eqref{eq:mb-prob} with $f(\vx)=0$ and each $g_i(\vx_i)=\|\vx_i\|_1$. 

Another example is the portfolio optimization \cite{markowitz1952portfolio}. Suppose we have a unit of capital to invest on $m$ assets. Let $x_i$ be the fraction of capital invested on the $i$-th asset and $\xi_i$ be the expected return rate of the $i$-th asset. The goal is to minimize the risk measured by $\sqrt{\vx^\top \bm{\Sigma} \vx}$ subject to total unit capital and minimum expected return $c$, where $\vx=(x_1;\ldots;x_m)$ and $\bm{\Sigma}$ is the covariance matrix. To find the optimal $\vx$, one can solve the problem:
\begin{equation}\label{eq:portfo}
\min_\vx \frac{1}{2}\vx^\top \bm{\Sigma} \vx, \st \sum_{i=1}^m x_i \le 1,\, \sum_{i=1}^m\xi_i x_i \ge c,\, x_i\ge 0,\forall i.
\end{equation} 
Introducing slack variables to the first two inequalities, one can easily write \eqref{eq:portfo} into the form of \eqref{eq:mb-prob} with a quadratic $f$ and each $g_i$ being an indicator function of the nonnegativity constraint set. 

In addition, \eqref{eq:mb-prob} includes as a special case the dual support vector machine (SVM) \cite{cortes1995support}. Given training data set $\{(\vx_i, y_i)\}_{i=1}^N$ with $y_i\in\{-1, +1\},\,\forall i$, let $\vX=[\vx_1,\ldots,\vx_N]$ and $\vy=[y_1;\ldots;y_N]$. The dual form of the linear SVM can be written as
\begin{equation}\label{eq:dsvm}
\min_\vtheta \frac{1}{2}\vtheta^\top \Diag(\vy)\vX^\top\vX\Diag(\vy)\vtheta - \ve^\top \vtheta, \st \vy^\top\vtheta=0,\, 0\le \theta_i\le C,\,\forall i,
\end{equation}
where $\vtheta=[\theta_1;\ldots;\theta_N]$, and $C$ is a given number relating to the soft margin size. It is easy to formulate \eqref{eq:dsvm} into the form of \eqref{eq:mb-prob} with $f$ being the quadratic objective function and each $g_i$ the indicator function of the set $[0,C]$. 

Finally, the penalized and constrained (PAC) regression problem \cite{james2013pcreg} is also one example of \eqref{eq:mb-prob} with $f(\vx)=\frac{1}{N} \sum_{j = 1}^N f_j(\vx)$ and linear constraint of $J$ equations. As $N\gg J$ (that often holds for problems with massive training data), the PAC regression satisfies Assumption \ref{assump:expensive-fi}. In addition, if $m\gg 1$ and $N\gg 1$, both \eqref{eq:portfo} and \eqref{eq:dsvm} satisfy the assumption, and thus the proposed async-parallel method will be efficient when applied to these problems. Although Assumption \ref{assump:expensive-fi} does not hold for \eqref{eq:bp} as $p>1$, our method running on a single node can still outperform state-of-the-art non-parallel solvers; see the numerical results in section \ref{sec:bp}.

\subsection{Block coordinate update}
The block coordinate update (BCU) method breaks possibly very high-dimensional variable into small pieces and renews one at a time while all the remaining blocks are fixed. Although the problem \eqref{eq:mb-prob} can be extremely large-scale and complicated, BCU solves a sequence of small-sized and easier subproblems. As \eqref{eq:mb-prob} owns nice structures, e.g., coordinate friendly \cite{peng2016cf}, BCU can not only have low per-update complexity but also enjoy faster overall convergence than the method that updates the whole variable every time. BCU has been applied to many unconstrained or block-separably constrained optimization problems (e.g., \cite{Tseng-01, tseng2009_CGD, nesterov2012cd, razaviyayn2013unified, XY_2013_multiblock, richtarik2014iteration, XY_2017_ecd, hong2017iteration-bcd}), and it has also been used to solve affinely constrained separable problems, i.e., in the form of \eqref{eq:mb-prob} without $f$ term (e.g., \cite{feng2015cvg-LADMM, deng2017parallel, He-Hou-Yuan, he2012alternating, he2017convergence}). However, only a few existing works (e.g., \cite{hong2014block, gao2017first, GXZ-RPDCU2016}) have studied BCU on solving affinely constrained problems with a nonseparable objective function.  

\subsection{Asynchronization}
Parallel computing methods distribute computation over and collect results from multiple nodes. Synchronous (sync) parallel methods require all nodes to keep in the same pace. Upon all nodes finish their own computation, they altogether proceed to the next step. This way, the faster node has to wait for the slowest one, and that wastes a lot of waiting time. On the contrary, async-parallel methods keep all nodes continuously working and eliminate the idle waiting time. Numerous works (e.g., \cite{recht2011hogwild, liu2015async-scd, liu2015asynchronous-JMLR, Peng_2016_AROCK}) have demonstrated that async-parallel methods can achieve significantly better speed-up than their sync-parallel counterparts.

Due to lack of synchronization, the information used by a certain node may be outdated. Hence the convergence of an async-parallel method cannot be easily inherited from its non-parallel counterpart but often requires a new tool of analysis. Most existing works only analyze such methods for unconstrained or block-separably constrained problems. Exceptions include \cite{wei20131, zhang2014asynchronous, chang2016asynchronous-I, chang2016asynchronous-II} that consider separable problems with special affine constraint.

\subsection{Related works}
Recent several years have witnessed the surge of async-parallel methods partly due to the increasingly large scale of data/variable involved in modern applications. However, only a few existing works discuss such methods for affinely constrained problems. Below we review the literature of async-parallel BCU methods in optimization and also primal-dual BCU methods for affinely constrained problems. 

It appears that the first async-parallel method was proposed by Chazan and Miranker \cite{D-W1969chaotic-relax} for solving linear systems. Later, such methods have been applied in many others fields. In optimization, the first async-parallel BCU method was due to Bertsekas and Tsitsiklis \cite{bertsekas1989parallel} for problems with a smooth objective. It was shown that the objective gradient sequence converges to zero. Tseng \cite{tseng1991rate} further analyzed its convergence rate and established local linear convergence by assuming isocost surface separation and a local Lipschitz error bound on the objective. Recently, \cite{liu2015asynchronous-JMLR, liu2015async-scd} developed async-parallel methods based on randomized BCU for convex problems with possibly block separable constraints. They established convergence and also rate results by assuming a bounded delay on the outdated block gradient information. The results have been extended to the case with unbounded probabilistic delay in \cite{peng2016-total-async}, which also shows convergence of the async-parallel BCU methods for nonconvex problems. On solving problems with convex separable objective and linear constraints, \cite{wei20131} proposed to apply the alternating direction method of multipliers (ADMM) in an asynchronous and distributive way. Assuming a special structure on the linear constraint, it established $O(1/k)$ ergodic convergence result, where $k$ is the total number of iterations. In \cite{zhang2014asynchronous, chang2016asynchronous-I, chang2016asynchronous-II, bianchi2016coordinate}, the async-ADMM is applied to distributed multi-agent optimization, which can be equivalently formulated into \eqref{eq:mb-prob} with $f=0$ and consensus constraint. Among them, \cite{bianchi2016coordinate} proved an almost sure convergence result, \cite{zhang2014asynchronous} showed sublinear convergence of the async-ADMM for convex problems, and \cite{chang2016asynchronous-II} established its linear convergence for strongly convex problems.  Besides convex problems, \cite{chang2016asynchronous-I} also considered nonconvex cases. Assuming certain structure on the problem and choosing appropriate parameters, it showed that any limit point of the iterates satisfies first-order optimality conditions. %Different from these methods, our algorithm applies to more general problems with nonseparable objective and arbitrary linear constraint. 
The works \cite{Peng_2016_AROCK, combettes2016asynchronous} developed async-parallel BCU methods for fixed-point or monotone inclusion problems. Although these settings are more general (including convex optimization as a special case), no convergence rate results have been shown under monotonicity assumption\footnote{In \cite{Peng_2016_AROCK}, a linear convergence result is established under strong monotonicity assumption, which is similar to strong convexity in optimization.} (similar to convexity in optimization).

Running on a single node, the proposed async-parallel method reduces to a serial randomized primal-dual BCU. In the literature, various Gauss-Seidel (GS) cyclic BCU methods have been developed for solving separable convex programs with linear constraints. Although a cyclic primal-dual BCU can empirically work well, in general it may diverge \cite{feng2015cvg-LADMM, chen2016direct, xu2018hybrid-JGS}. By an example of $3\times 3$ linear system, \cite{chen2016direct} showed that the direct extension of ADMM could diverge on solving problems with more than 2 blocks. The works \cite{feng2015cvg-LADMM, xu2018hybrid-JGS} showed that even with proximal terms, the cyclic primal-dual BCU can still diverge. Hence, to guarantee convergence, additional assumptions besides convexity must be made, such as strong convexity on part of the objective \cite{han2012note, cai2017convergence, li2015convergent, LMZ2015JORSC, lin2015global, davis2017three-op} and orthogonality properties of block matrices in the linear constraint \cite{chen2016direct}. Assuming strong convexity of each block component function and choosing the penalty parameter within a region, \cite{han2012note} showed the convergence of ADMM to an optimal solution for solving problems with multiple blocks. For 3-block problems, \cite{cai2017convergence, li2015convergent, davis2017three-op} established the convergence of ADMM and/or its variant by assuming strong convexity of one block component function. For general $m$-block problems, \cite{LMZ2015JORSC} showed that if $m-1$ block component functions are strongly convex, then ADMM with appropriate penalty parameter is guaranteed to converge. Without these assumptions, modifications to the algorithm are necessary for convergence. For example, \cite{he2012alternating, he2017convergence} performed a correction step after each cycle of updates. On solving linear system or quadratic programming,  \cite{sun2015expected} proposed, at each iteration, to first randomly permute all block variables and then perform a cyclic update. Jacobi-type update together with proximal terms was used in \cite{deng2017parallel, He-Hou-Yuan} to ensure the convergence of the algorithm, which turns out to be a linearized augmented Lagrange method (ALM). In addition, a hybrid Jacobi-GS update was performed in \cite{sun2015convergent, li2016schur, xu2018hybrid-JGS}. Different from these modifications, our algorithm simply employs randomization in selecting block variable and can perform significantly better than Jacobi-type methods. In addition, convergence is guaranteed with convexity assumption and thus better than those results for GS-type methods.

\subsection{Contributions} The contributions are summarized as follows.
\begin{itemize}
\renewcommand\labelitemi{--}
\item We propose an async-parallel BCU method for solving multi-block structured convex programs with linear constraint. The algorithm is the first async-parallel primal-dual method for affinely constrained problems with \emph{nonseparable} objective. When there is only one node, it reduces to a novel serial primal-dual BCU method.
\item With convexity and existence of a primal-dual solution, convergence of the proposed method is guaranteed. We first establish convergence of the serial BCU method. We show that the objective value converges in probability to the optimal value and also the constraint residual to zero. In addition, we establish an ergodic convergence rate result. Then through bounding a cross term involving delayed block gradient, we prove that similar convergence results hold for the async-parallel BCU method if a delay-dependent stepsize is chosen.
\item We implement the proposed algorithm and apply it to the basis pursuit, quadratic programming, and also the support vector machine problems. Numerical results demonstrate that the serial BCU is comparable to or better than state-of-the-art methods. In addition, the async-parallel BCU method can achieve significantly better speed-up performance than its sync-parallel counterpart. 
\end{itemize}

\subsection{Notation and Outline}\label{sec:notation} 
We use bold small letters $\vx, \vy, \vlam, \ldots$ for vectors and bold capital letters $\vA, \vL, \vP, \ldots$ for matrices. $[m]$ denotes the integer set $\{1,2,\ldots,m\}$. $\vU_i\vx$ represents a vector with $\vx_i$ for its $i$-th block and zero for all other $m-1$ blocks. $\mathrm{blkdiag}(\vP_1,\ldots,\vP_m)$ denotes a block diagonal matrix with $\vP_1,\ldots,\vP_m$ on the diagonal blocks. We denote $\|\vx\|$ as the Euclidean norm of $\vx$ and $\|\vx\|_\vP=\sqrt{\vx^\top \vP \vx}$ for a symmetric positive semidefinite matrix $\vP$. We reserve $\vI$ for the identity matrix, and its size is clear from the context. $\EE_{i_k}$ stands for the expectation about $i_k$ conditional on previous history $\{i_1,\ldots,i_{k-1}\}$. We use $\bm{\xi}^k\overset{p}\to\bm{\xi}$ for convergence in probability of a random vector sequence $\bm{\xi}^k$ to $\bm{\xi}$.    

For ease of notation, we let $g(\vx)=\sum_{i=1}^m g_i(\vx_i)$, $F=f+g$, and $\vA=[\vA_1,\ldots,\vA_m]$. Denote $$\Phi(\bar{\vx},\vx,\vlam)=F(\bar{\vx})-F(\vx)-\langle\vlam, \vA\bar{\vx}-\vb\rangle.$$ Then $(\vx^*,\vlam^*)$ is a saddle point of \eqref{eq:mb-prob} if $\vA\vx^*=\vb$ and $\Phi(\vx,\vx^*,\vlam^*)\ge 0,\,\forall \vx$.

%\begin{definition}[Proximable function]\label{def:prox-fun}
The proximal operator of a function $\psi$ is defined as
\begin{equation}\label{eq:prox}\prox_\psi(\vx) = \argmin_\vy \psi(\vy) + \frac{1}{2}\|\vx-\vy\|^2.
\end{equation}
If $\prox_\psi(\vx)$ has a closed-form solution or is easy to compute, we call $\psi$ \emph{proximable}.
%\end{definition}

\textbf{Outline.} The rest of the paper is organized as follows. In section \ref{sec:algorithm}, we present the serial and also async-parallel primal-dual BCU methods for \eqref{eq:mb-prob}. Convergence results of the algorithms are shown in section \ref{sec:analysis}. Section \ref{sec:numerical} gives experimental results, and finally section \ref{sec:conclusion} concludes the paper.

\section{Algorithm}\label{sec:algorithm}
In this section, we propose an async-parallel primal-dual method for solving \eqref{eq:mb-prob}. Our algorithm is a BCU-type method based on the augmented Lagrangian function of \eqref{eq:mb-prob}:
$$\cL_\beta(\vx,\vlam) = f(\vx) + g(\vx) - \langle \vlam, \vA \vx -\vb\rangle + \frac{\beta}{2}\|\vA\vx-\vb\|^2,$$
where $\vlam$ is the multiplier (or augmented Lagrangian dual variable), and $\beta$ is a penalty parameter. 

\subsection{Non-parallel method}
For ease of understanding, we first present a non-parallel method in Algorithm \ref{alg:rpdc}. At every iteration, the algorithm chooses one out of $m$ block uniformly at random and renews it by \eqref{eq:update-x} while fixing all the remaining blocks. Upon finishing the update to $\vx$, it immediately changes the multiplier $\vlam$. The linearization to possibly complicated smooth term $f$ greatly eases the $\vx$-subproblem. Depending on the form of $g_i$, we can choose appropriate $\vP_i$ to make \eqref{eq:update-x} simple to solve. Since each $g_i$ is proximable, one can always easily find a solution to  \eqref{eq:update-x} if $\vP_i=\eta_i \vI$. For even simpler $g_i$ such as $\ell_1$-norm and indicator function of a box constraint set, we can set $\vP_i$ to a diagonal matrix and have a closed-form solution to  \eqref{eq:update-x}. Note that the algorithm is a special case of Algorithm 1 in \cite{GXZ-RPDCU2016} with only one group of variables. We include it here for ease of understanding our parallel method.

Randomly choosing a block to update has advantages over the cyclic way in both theoretical and empirical perspectives. We will show that \emph{this randomized BCU has guaranteed convergence with convexity other than strong convexity assumed by the cyclic primal-dual BCU}. In addition, randomization enables us to parallelize the algorithm in an efficient way as shown in Algorithm \ref{alg:async-rpdc}.  

\begin{algorithm}[h]\caption{Randomized primal-dual block update for \eqref{eq:mb-prob}}\label{alg:rpdc}
\DontPrintSemicolon
\textbf{Initialization:} choose $\vx^0$ and $\vlam^0=\vzero$; let $\vr^0=\vA\vx^0-\vb$ and $k=0$; set $\beta,\rho$ and $\vP_i$'s.\;
\While{the stopping conditions not satisfied}{
Pick $i_k$ from $[m]$ uniformly at random.\;
For any $i\neq i_k$, keep $\vx_i^{k+1}=\vx^k_i$, and for $i=i_k$, update $\vx_i$ by
\begin{equation}
\vx_i^{k+1}\in\argmin_{\vx_i}\langle\nabla_i f(\vx^k)-\vA_i^\top(\vlam^k-\beta \vr^k), \vx_i\rangle +g_i(\vx_i)+\frac{1}{2}\|\vx_i-\vx_i^k\|_{\vP_i}^2,\label{eq:update-x}
\end{equation}
Update residual $\vr$ and multipliers $\vlam$ by
\begin{align}
&\vr^{k+1}=\vr^k+\vA_{i_k}(\vx_{i_k}^{k+1}-\vx_{i_k}^k),\label{eq:update-r}\\
&\vlam^{k+1}=\vlam^k-\rho \vr^{k+1}.\label{eq:update-lam}
\end{align}
Let $k\gets k+1$.
}
\end{algorithm}

\subsection{Async-parallel method}

Assume there are $p$ nodes. Let the data and variables be stored in a global memory accessible to every node. We let one node (called \emph{master node}) update both primal variable $\vx$ and dual variable $\vlam$ and the remaining ones (called \emph{worker node}s) compute block gradients of $f$ and provide them to the master node. The method is summarized in Algorithm \ref{alg:async-rpdc}. 

To achieve nice practical speed-up performance, we make the following assumption:
\setcounter{assumption}{-1}
\begin{assumption}\label{assump:expensive-fi}
The cost of computing $\nabla_i f(\vx)$ is roughly at least $p-1$ times of that of updating $\vx_i$, $\vr$, and $\vlam$ respectively by \eqref{eq:async-update-x}, \eqref{eq:update-r} and \eqref{eq:update-lam} for all $i$, where $p$ is the number of nodes.
 \end{assumption} 
Note that our theoretical analysis does not require this assumption. Roughly speaking, the above assumption means that the worker nodes compute block gradients no faster than the master node can use them. When it holds, the master node can quickly digest the block gradient information fed by all worker nodes. Without this assumption, Algorithm \ref{alg:async-rpdc} may not perform well in terms of parallel efficiency. For example, if $p>2$, and computing $\nabla_i f(\vx)$ takes similar time as updating $\vx_i$, $\vr$ and $\vlam$, then until the $k$-th iteration, there would be roughly $k(p-2)$ partial gradients that have been sent to but not used by the master node. In this case, a lot of computation will be wasted.   

We make a few remarks on Algorithm \ref{alg:async-rpdc} as follows.
\begin{itemize}
\renewcommand\labelitemi{--}
\item \textbf{Special case:} If there is only one node (i.e., $p=1$), the algorithm simply reduces to the non-parallel Algorithm \ref{alg:rpdc}. In this case, Assumption \ref{assump:expensive-fi} trivially holds.
\item \textbf{Iteration number:} Only the master node increases the iteration number $k$, which counts the times $\vlam$ is updated and also the number of used block gradients. The sync-parallel method (e.g., in \cite{GXZ-RPDCU2016}) chooses to update multiple blocks every time, and the computation is distributed over multiple nodes. It generally requires larger weight in the proximal term for convergence. Hence, even if $\vv^k=\nabla_{i_k}f(\vx^k),\,\forall k$, Algorithm \ref{alg:async-rpdc} does not reduce to its sync-parallel counterpart. 
\item \textbf{Delayed information:} Since all worker nodes provide block gradients to the master node, we cannot guarantee every computed block gradient will be immediately used to update $\vx$. Hence, in \eqref{eq:async-update-x}, $\vv^k$ may not equal $\nabla_{i}f(\vx^k)$ but can be a delayed (i.e., outdated) block gradient. The delay is usually in the same order of $p$ and can affect the stepsize, but the affect is negligible as the block number $m$ is greater than the delay in an order (see Theorem \ref{thm:async-cvg-prob}).

Because $\vx$-blocks are computed in the master node, the values of $\vr$ and $\vlam$ used in the update are always up-to-date. One can let worker nodes compute new $\vx_i$'s and then feed them (or also the changes in $\vr$) to the master node. That way, $\vr$ and $\vlam$ will also be outdated when computing $\vx$-blocks. 

\item \textbf{Load balance:} Under Assumption \ref{assump:expensive-fi}, if \eqref{eq:async-update-x} is easy to solve (e.g., $\vP_i=\eta_i \vI$) and all nodes have similar computing power, the master node will have used all received block gradients before a new one comes. We let the master node itself also compute block gradient if there is no new one sent from any worker node. This way, all nodes work continuously without idle wait. Compared to its sync-parallel counterpart that typically suffers serious load imbalance, the async-parallel can achieve better speed-up; see the numerical results in section \ref{sec:svm}. 
\end{itemize}

\begin{algorithm}\caption{Async-parallel randomized primal-dual block update for \eqref{eq:mb-prob}}\label{alg:async-rpdc}
\DontPrintSemicolon
\textbf{Initialization:} choose $\vx^0$ and $\vlam^0=\vzero$; let $\vr^0=\vA\vx^0-\vb$ and $k=0$; set $\beta,\rho$ and $\vP_i$'s.\;
\While{the stopping conditions not satisfied}{
\If{worker node}{
Pick $j$ from $[m]$ uniformly at random.\;
Read $\vx$ from the memory as $\hat{\vx}$.\;
Compute $\nabla_j f(\hat{\vx})$ and send it together with the block $j$ to \emph{master node}
}
\If{master node}{
\If{received one new pair $\big(j, \nabla_{j} f(\hat{\vx})\big)$}{
Let $i_k = j$ and $\vv^k = \nabla_{j} f(\hat{\vx})$}
\Else{
Pick $i_k$ from $[m]$ uniformly at random and let $\vv^k=\nabla_{i_k} f(\vx^k)$
}
For any $i\neq i_k$, keep $\vx_i^{k+1}=\vx^k_i$, and for $i=i_k$, update $\vx_i$ by
\begin{equation}
\vx_i^{k+1}\in\argmin_{\vx_i}\langle \vv^k-\vA_i^\top(\vlam^k-\beta \vr^k), \vx_i\rangle +g_i(\vx_i)+\frac{1}{2}\|\vx_i-\vx_i^k\|_{\vP_i}^2,\label{eq:async-update-x}
\end{equation}Update residual $\vr$ and multipliers $\vlam$ by \eqref{eq:update-r} and \eqref{eq:update-lam}.\;
Let $k\gets k+1$.
}
}
\end{algorithm}

\section{Convergence analysis}\label{sec:analysis}
In this section, we present convergence results of the proposed algorithm. First, we analyze the non-parallel Algorithm \ref{alg:rpdc}. We show that the objective value $F(\vx^k)$ and the residual $\vA\vx^k-\vb$ converge to the optimal value and zero respectively in probability. In addition, we establish a sublinear convergence rate result based on an averaged point. Then, through bounding a cross term involving the delayed block gradient, we establish similar results for the async-parallel Algorithm \ref{alg:async-rpdc}.

Throughout our analysis, we make the following assumptions.
\begin{assumption}[Existence of a solution]\label{assump:sdl-pt}
There exists one pair of primal-dual solution $(\vx^*,\vlam^*)$ such that $\vA\vx^*=\vb$ and $\Phi(\vx,\vx^*,\vlam^*)\ge 0,\,\forall \vx$.
\end{assumption}

\begin{assumption}[Gradient Lipschitz continuity]\label{assump:lip}
There exist constants $L_i$'s and $L_r$ such that for any $\vx$ and $\vy$,
$$\|\nabla_i f(\vx+\vU_i \vy)-\nabla_i f(\vx)\|\le L_i\|\vy_i\|,\,i=1,\ldots,m,$$
and 
$$\|\nabla f(\vx+\vU_i \vy)-\nabla f(\vx)\|\le L_r\|\vy_i\|,\,i=1,\ldots,m.$$
\end{assumption}
Denote $\vL=\diag(L_1,\ldots,L_m)$. Then under the above assumption, it holds that
\begin{equation}\label{eq:lip-prop}
f(\vx+\vU_i \vy)\le f(\vx) + \langle \nabla_i f(\vx), \vy_i\rangle + \frac{L_i}{2}\|\vy_i\|^2,\,\forall i,\,\forall \vx, \vy.
\end{equation}

\subsection{Convergence results of Algorithm \ref{alg:rpdc}}\label{sec:convg-serial}

Although Algorithm \ref{alg:rpdc} is a special case of the method in \cite{GXZ-RPDCU2016}, its convergence analysis is easier and can be made more succinct. In addition, our analysis for Algorithm \ref{alg:async-rpdc} is based on that for Algorithm \ref{alg:rpdc}. Hence, we provide a complete convergence analysis for Algorithm \ref{alg:rpdc}. First, we establish several lemmas, which will be used to show our main convergence results.
\begin{lemma}\label{lem:f-term}
Let $\{\vx^k\}$ be the sequence generated from Algorithm \ref{alg:rpdc}. Then for any $\vx$ independent of $i_k$, it holds that
{\small\begin{align*}
&\EE_{i_k}\left\langle\nabla_{i_k} f(\vx^k),\vx_{i_k}^{k+1}-\vx_{i_k}\right\rangle
\ge  -\big(1-\frac{1}{m}\big)[f(\vx^k)-f(\vx)]+\EE_{i_k}\left[f(\vx^{k+1})-f(\vx)-\frac{1}{2}\|\vx^{k+1}-\vx^k\|_\vL^2\right].
\end{align*}
}
\end{lemma}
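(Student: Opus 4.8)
The plan is to combine the block Lipschitz descent inequality \eqref{eq:lip-prop} with convexity of $f$, using the fact that $i_k$ is drawn uniformly from $[m]$ so that the conditional expectation $\EE_{i_k}$ turns a single-block gradient into the full gradient scaled by $1/m$. I would begin by decomposing the inner product on the left as
\[
\langle \nabla_{i_k} f(\vx^k), \vx_{i_k}^{k+1}-\vx_{i_k}\rangle = \langle \nabla_{i_k} f(\vx^k), \vx_{i_k}^{k+1}-\vx_{i_k}^k\rangle + \langle \nabla_{i_k} f(\vx^k), \vx_{i_k}^k-\vx_{i_k}\rangle,
\]
and then bound the two pieces separately before taking $\EE_{i_k}$.

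For the first piece, I would use that only block $i_k$ changes in one iteration, so $\vx^{k+1} = \vx^k + \vU_{i_k}(\vx_{i_k}^{k+1}-\vx_{i_k}^k)$. Applying \eqref{eq:lip-prop} with $i=i_k$ and rearranging gives
\[
\langle \nabla_{i_k} f(\vx^k), \vx_{i_k}^{k+1}-\vx_{i_k}^k\rangle \ge f(\vx^{k+1}) - f(\vx^k) - \tfrac{L_{i_k}}{2}\|\vx_{i_k}^{k+1}-\vx_{i_k}^k\|^2.
\]
The key observation is that, because only block $i_k$ moves, $L_{i_k}\|\vx_{i_k}^{k+1}-\vx_{i_k}^k\|^2 = \|\vx^{k+1}-\vx^k\|_\vL^2$, which lets me replace the single-block norm by the weighted full norm in the statement. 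Taking $\EE_{i_k}$ then produces exactly the $\EE_{i_k}\!\left[f(\vx^{k+1}) - f(\vx^k) - \tfrac12\|\vx^{k+1}-\vx^k\|_\vL^2\right]$ contribution.

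For the second piece, since $\vx^k$ (determined by the history) and $\vx$ are both independent of $i_k$, uniformity of the sampling gives
\[
\EE_{i_k}\langle \nabla_{i_k} f(\vx^k), \vx_{i_k}^k-\vx_{i_k}\rangle = \tfrac{1}{m}\sum_{i=1}^m \langle \nabla_i f(\vx^k), \vx_i^k - \vx_i\rangle = \tfrac{1}{m}\langle \nabla f(\vx^k), \vx^k - \vx\rangle,
\]
and convexity of $f$ bounds this below by $\tfrac{1}{m}[f(\vx^k) - f(\vx)]$. Adding the two lower bounds and regrouping the $f(\vx^k)$ and $f(\vx)$ terms, using $\tfrac{1}{m}f(\vx^k) - f(\vx^k) = -(1-\tfrac1m)f(\vx^k)$ and absorbing $-\tfrac1m f(\vx)$ into $-f(\vx)$, reproduces the claimed right-hand side.

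This is essentially a bookkeeping argument, so I do not expect a genuine obstacle. The only steps requiring care are the identification $L_{i_k}\|\vx_{i_k}^{k+1}-\vx_{i_k}^k\|^2 = \|\vx^{k+1}-\vx^k\|_\vL^2$ (valid precisely because a single block is updated) and the consistent regrouping of the $(1-\tfrac1m)$ coefficients; everything else follows directly from \eqref{eq:lip-prop}, convexity of $f$, and the uniform distribution of $i_k$.
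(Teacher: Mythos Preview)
Your proposal is correct and follows essentially the same approach as the paper: the same two-term decomposition, the Lipschitz descent inequality \eqref{eq:lip-prop} on the $\vx_{i_k}^{k+1}-\vx_{i_k}^k$ piece, and the uniform-sampling-plus-convexity bound on the $\vx_{i_k}^k-\vx_{i_k}$ piece. The final regrouping you describe is exactly what the paper does, so there is nothing to add.
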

\begin{proof}
We write $\langle\nabla_{i_k} f(\vx^k),\vx_{i_k}^{k+1}-\vx_{i_k}\rangle=\langle\nabla_{i_k} f(\vx^k),\vx_{i_k}^{k}-\vx_{i_k}\rangle+\langle\nabla_{i_k} f(\vx^k),\vx_{i_k}^{k+1}-\vx_{i_k}^k\rangle$. For the first term, we use the uniform distribution of $i_k$ on $[m]$ and the convexity of $f$ to have
$$\EE_{i_k}\langle\nabla_{i_k} f(\vx^k),\vx_{i_k}^{k}-\vx_{i_k}\rangle=\frac{1}{m}\langle\nabla f(\vx^k),\vx^{k}-\vx\rangle\ge \frac{1}{m}\big[f(\vx^k)-f(\vx)\big],$$
and for the second term, we use \eqref{eq:lip-prop} to have
\begin{align}\label{eq:crs-t2}
\langle\nabla_{i_k} f(\vx^k),\vx_{i_k}^{k+1}-\vx_{i_k}^k\rangle\ge &f(\vx^{k+1})-f(\vx^k)-\frac{L_{i_k}}{2}\|\vx_{i_k}^{k+1}-\vx_{i_k}^k\|^2\cr
=&f(\vx^{k+1})-f(\vx^k) - \frac{1}{2}\|\vx^{k+1}-\vx^k\|_\vL^2.
\end{align} 
Combining the above two inequalities gives the desired result.
\end{proof}

\begin{lemma}\label{lem:rlam-term}
For any $\vx$ independent of $i_k$ such that $\vA\vx=\vb$, it holds
\begin{align*}
& \EE_{i_k}\langle -\vA_{i_k}^\top(\vlam^k-\beta \vr^k), \vx_{i_k}^{k+1}-\vx_{i_k}\rangle \cr
%= -\big(1-\frac{1}{m}\big)\langle -A^\top(\vlam^k-\beta \vr^k), \vx^k-x\rangle+\EE_{i_k}\langle -A^\top(\vlam^k-\beta \vr^k), \vx^{k+1}-\vx\rangle\cr
= & -\big(1-\frac{1}{m}\big)\left(-\langle\vlam^k, \vr^k\rangle + \beta\|\vr^k\|^2\right) -\EE_{i_k}\langle \vlam^{k+1}, \vr^{k+1}\rangle + (\beta-\rho)\EE_{i_k}\|\vr^{k+1}\|^2\cr
& -\frac{\beta}{2} \EE_{i_k}\left[\| \vr^{k+1}\|^2-\| \vr^k\|^2+\|\vx^{k+1}-\vx^k\|_{\vA^\top \vA}^2 \right].
\end{align*}
\end{lemma}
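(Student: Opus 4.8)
The plan is to verify this identity directly by expanding the inner product and substituting the three structural relations available: the residual update \eqref{eq:update-r}, the dual update \eqref{eq:update-lam}, and the feasibility $\vA\vx=\vb$. First I would split the displacement through the current iterate, $\vx_{i_k}^{k+1}-\vx_{i_k}=(\vx_{i_k}^{k+1}-\vx_{i_k}^k)+(\vx_{i_k}^k-\vx_{i_k})$, and handle the two resulting cross terms separately.

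For the second term $\langle-\vA_{i_k}^\top(\vlam^k-\beta\vr^k),\vx_{i_k}^k-\vx_{i_k}\rangle$, I would take $\EE_{i_k}$ and use the uniform law of $i_k$ to average the block selection into the full matrix, giving $\frac{1}{m}\langle-\vA^\top(\vlam^k-\beta\vr^k),\vx^k-\vx\rangle$. Since $\vA\vx^k-\vb=\vr^k$ and $\vA\vx=\vb$ (feasibility of $\vx$), we have $\vA(\vx^k-\vx)=\vr^k$, so this piece collapses to $\frac{1}{m}\big(-\langle\vlam^k,\vr^k\rangle+\beta\|\vr^k\|^2\big)$. For the first term $\langle-\vA_{i_k}^\top(\vlam^k-\beta\vr^k),\vx_{i_k}^{k+1}-\vx_{i_k}^k\rangle$, the key observation is that \eqref{eq:update-r} gives $\vA_{i_k}(\vx_{i_k}^{k+1}-\vx_{i_k}^k)=\vr^{k+1}-\vr^k$, so the whole term rewrites as $-\langle\vlam^k-\beta\vr^k,\vr^{k+1}-\vr^k\rangle$. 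Here I would make the two creative substitutions that produce the target form: use \eqref{eq:update-lam} in the form $\langle\vlam^k,\vr^{k+1}\rangle=\langle\vlam^{k+1},\vr^{k+1}\rangle+\rho\|\vr^{k+1}\|^2$ to introduce the updated multiplier and the $\rho$-dependent term, and use the polarization identity $\langle\vr^k,\vr^{k+1}\rangle=\tfrac12\big(\|\vr^k\|^2+\|\vr^{k+1}\|^2-\|\vr^{k+1}-\vr^k\|^2\big)$ to turn the remaining inner products into squared norms. Finally I would note that $\|\vx^{k+1}-\vx^k\|_{\vA^\top\vA}^2=\|\vA_{i_k}(\vx_{i_k}^{k+1}-\vx_{i_k}^k)\|^2=\|\vr^{k+1}-\vr^k\|^2$, since only block $i_k$ changes, which matches the last term on the right-hand side.

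Combining the two pieces and taking $\EE_{i_k}$ should reproduce the claimed equality. In particular, the coefficient $1-\tfrac1m$ is not assumed but emerges by combining the $+\langle\vlam^k,\vr^k\rangle$ (and $-\beta\|\vr^k\|^2$) terms generated by the dual substitution in the first piece with the $-\tfrac1m\langle\vlam^k,\vr^k\rangle$ (and $+\tfrac{\beta}{m}\|\vr^k\|^2$) terms from the averaged second piece. I do not expect a genuine obstacle here, since the statement is an exact identity rather than an estimate; the only real care is bookkeeping — tracking which quantities are deterministic given the history (such as $\langle\vlam^k,\vr^k\rangle$ and $\|\vr^k\|^2$, which come outside $\EE_{i_k}$) versus which depend on $i_k$ through $\vr^{k+1}$ and must remain inside the expectation — and collecting the coefficients of $\EE_{i_k}\|\vr^{k+1}\|^2$ (which should total $\beta-\rho-\tfrac{\beta}{2}=\tfrac{\beta}{2}-\rho$) correctly after the two substitutions.
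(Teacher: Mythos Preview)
Your proposal is correct and follows essentially the same route as the paper: split $\vx_{i_k}^{k+1}-\vx_{i_k}$ through $\vx_{i_k}^k$, average the deterministic piece via the uniform law of $i_k$, and then rewrite using $\vr^{k+1}-\vr^k=\vA(\vx^{k+1}-\vx^k)$, the dual update $\vlam^k=\vlam^{k+1}+\rho\vr^{k+1}$, and a polarization identity. The only cosmetic difference is that the paper first recombines the two pieces into $-(1-\tfrac{1}{m})\langle\vy^k,\vx^k-\vx\rangle+\EE_{i_k}\langle\vy^k,\vx^{k+1}-\vx\rangle$ and applies polarization to $\langle\vA(\vx^{k+1}-\vx^k),\vA(\vx^{k+1}-\vx)\rangle$, whereas you keep the pieces separate and polarize $\langle\vr^k,\vr^{k+1}\rangle$; the algebra is equivalent.
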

\begin{proof}
Let $\vy^k=-\vA^\top (\vlam^k-\beta \vr^k)$. Then
\begin{align}\label{eq:rlam-term-eq1}
\EE_{i_k}\langle \vy_{i_k}^k, \vx_{i_k}^{k+1}-\vx_{i_k}\rangle =&  \EE_{i_k}\langle \vy_{i_k}^k, \vx_{i_k}^{k}-\vx_{i_k}\rangle+\EE_{i_k}\langle \vy_{i_k}^k, \vx_{i_k}^{k+1}-\vx_{i_k}^k\rangle\cr
=&\frac{1}{m} \langle \vy^k, \vx^{k}-\vx\rangle+ \EE_{i_k}\langle \vy^k, \vx^{k+1}-\vx^k\rangle\cr
=&-\big(1-\frac{1}{m}\big) \langle \vy^k, \vx^{k}-\vx\rangle+ \EE_{i_k}\langle \vy^k, \vx^{k+1}-\vx\rangle.
\end{align}
Note $\vy^k=-\vA^\top \vlam^{k+1}+(\beta-\rho) \vA^\top \vr^{k+1}-\beta \vA^\top(\vr^{k+1}-\vr^k)$ and $\vr^{k+1}-\vr^k=\vA(\vx^{k+1}-\vx^k)$. In addition, from $\vA\vx=\vb$, we have $\vA(\vx^{k+1}-\vx)=\vr^{k+1}$. Hence,
\begin{align}\label{eq:rlam-term-eq2}
\langle \vy^k, \vx^{k+1}-\vx\rangle = \langle -\vA^\top \vlam^{k+1}, \vx^{k+1}-\vx\rangle + (\beta-\rho)\|\vr^{k+1}\|^2-\beta\big\langle \vA(\vx^{k+1}-\vx^k), \vA(\vx^{k+1}-\vx)\big\rangle.
\end{align}
Noting
$$\big\langle \vA(\vx^{k+1}-\vx^k), \vA(\vx^{k+1}-\vx)\big\rangle=\frac{1}{2}\big[\| \vr^{k+1}\|^2-\| \vr^k\|^2+\|\vx^{k+1}-\vx^k\|_{\vA^\top \vA}^2 \big],$$
we complete the proof by plugging \eqref{eq:rlam-term-eq2} into \eqref{eq:rlam-term-eq1}.
\end{proof}

\begin{lemma}\label{lem:g-term}
For any $\vx$ independent of $i_k$, it holds
\begin{align*}
\EE_{i_k}\left\langle\tilde{\nabla}g_{i_k}(\vx_{i_k}^{k+1}), \vx_{i_k}^{k+1}-\vx_{i_k}\right\rangle\ge \EE_{i_k}[g(\vx^{k+1})-g(\vx)] - \big(1-\frac{1}{m}\big)[g(\vx^k)-g(\vx)],
\end{align*}
where $\tilde{\nabla}g_{i_k}(\vx_{i_k}^{k+1})$ denotes a subgradient of $g_{i_k}$ at $\vx_{i_k}^{k+1}$.
\end{lemma}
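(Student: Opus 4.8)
The plan is to mirror the proofs of Lemmas \ref{lem:f-term} and \ref{lem:rlam-term}, but now the linearization step is replaced by the \emph{defining subgradient inequality} for the convex function $g_{i_k}$. The starting point is that, since $g_{i_k}$ is convex and $\tilde{\nabla}g_{i_k}(\vx_{i_k}^{k+1})$ is a subgradient at $\vx_{i_k}^{k+1}$, we have for any fixed $\vx_{i_k}$ the pointwise bound $\langle \tilde{\nabla}g_{i_k}(\vx_{i_k}^{k+1}), \vx_{i_k}^{k+1}-\vx_{i_k}\rangle \ge g_{i_k}(\vx_{i_k}^{k+1})-g_{i_k}(\vx_{i_k})$. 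Everything else is bookkeeping with the block structure and the uniform law of $i_k$.

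First I would split the lower bound into a part that compares to the current iterate and a part that compares to the reference point, by writing $g_{i_k}(\vx_{i_k}^{k+1})-g_{i_k}(\vx_{i_k}) = \big[g_{i_k}(\vx_{i_k}^{k+1})-g_{i_k}(\vx_{i_k}^k)\big] + \big[g_{i_k}(\vx_{i_k}^k)-g_{i_k}(\vx_{i_k})\big]$. The key structural observation is that in Algorithm \ref{alg:rpdc} only block $i_k$ is changed, so $\vx_i^{k+1}=\vx_i^k$ for all $i\neq i_k$; consequently $g_{i_k}(\vx_{i_k}^{k+1})-g_{i_k}(\vx_{i_k}^k) = g(\vx^{k+1})-g(\vx^k)$, i.e. the single-block increment equals the full-objective increment. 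The second bracket, once I take $\EE_{i_k}$ with $i_k$ uniform on $[m]$, averages to $\frac{1}{m}\sum_{i=1}^m\big[g_i(\vx_i^k)-g_i(\vx_i)\big] = \frac{1}{m}\big[g(\vx^k)-g(\vx)\big]$, using that $\vx$ is independent of $i_k$.

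Combining these gives $\EE_{i_k}\langle\tilde{\nabla}g_{i_k}(\vx_{i_k}^{k+1}),\vx_{i_k}^{k+1}-\vx_{i_k}\rangle \ge \EE_{i_k}\big[g(\vx^{k+1})-g(\vx^k)\big] + \frac{1}{m}\big[g(\vx^k)-g(\vx)\big]$, and the final step is a purely algebraic rearrangement verifying that the right-hand side equals $\EE_{i_k}[g(\vx^{k+1})-g(\vx)] - (1-\frac{1}{m})[g(\vx^k)-g(\vx)]$. Indeed, expanding the target expression collects $\EE_{i_k}[g(\vx^{k+1})]$ together with $-g(\vx^k)+\frac{1}{m}g(\vx^k)-\frac{1}{m}g(\vx)$, which matches the bound derived above.

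I do not expect a genuine obstacle here; this lemma is the ``nonsmooth counterpart'' of Lemma \ref{lem:f-term}, and the only point requiring care is keeping straight which quantities are random and which are deterministic conditional on the history. Specifically, $\vx^{k+1}$ depends on $i_k$ and must stay inside $\EE_{i_k}$, whereas $\vx^k$ (and $\vx$) are measurable with respect to the previous history, so $g(\vx^k)$ pulls out of the conditional expectation and pairs cleanly with the $(1-\frac{1}{m})$ coefficient. Getting that coefficient correct is the whole content of the matching step.
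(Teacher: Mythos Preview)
Your proposal is correct and follows essentially the same approach as the paper's own proof: apply the subgradient inequality for $g_{i_k}$, split $g_{i_k}(\vx_{i_k}^{k+1})-g_{i_k}(\vx_{i_k})$ into the increment relative to $\vx_{i_k}^k$ and the term $g_{i_k}(\vx_{i_k}^k)-g_{i_k}(\vx_{i_k})$, use the block structure to rewrite the increment as $g(\vx^{k+1})-g(\vx^k)$, and take $\EE_{i_k}$ of the second piece to obtain $\tfrac{1}{m}[g(\vx^k)-g(\vx)]$. The final algebraic regrouping is identical to the paper's.
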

\begin{proof}
From the convexity of $g_{i_k}$ and definition of subgradient, it follows that
\begin{equation}\label{eq:gik-ineq}
\EE_{i_k}\left\langle\tilde{\nabla}g_{i_k}(\vx_{i_k}^{k+1}), \vx_{i_k}^{k+1}-\vx_{i_k}\right\rangle\ge \EE_{i_k}\big[g_{i_k}(\vx_{i_k}^{k+1})- g_{i_k}(\vx_{i_k})\big].
\end{equation}
Writing $g_{i_k}(\vx_{i_k}^{k+1})- g_{i_k}(\vx_{i_k})=g_{i_k}(\vx_{i_k}^{k})- g_{i_k}(\vx_{i_k})+g_{i_k}(\vx_{i_k}^{k+1})- g_{i_k}(\vx_{i_k}^k)$ and taking the conditional expectation give
$$\EE_{i_k}\big[g_{i_k}(\vx_{i_k}^{k+1})- g_{i_k}(\vx_{i_k})\big]=\frac{1}{m}\big[g(\vx^k)-g(\vx)\big]+\EE_{i_k}\big[g(\vx^{k+1})-g(\vx^k)\big].$$
We obtain the desired result by plugging the above equation into \eqref{eq:gik-ineq}.
\end{proof}
Using the above three lemmas, we show an inequality after each iteration of the algorithm. 
\begin{theorem}[Fundamental result]
Let $\{(\vx^k,\vr^k,\vlam^k)\}$ be the sequence generated from Algorithm \ref{alg:rpdc}. Then for any $\vx$ such that $\vA\vx=\vb$, it holds
\begin{align}\label{eq:1iter-result}
&\EE_{i_k}\left[F(\vx^{k+1})-F(\vx) - \langle \vlam^{k+1}, \vr^{k+1}\rangle+(\beta-\rho)\|\vr^{k+1}\|^2-\frac{\beta}{2}\| \vr^{k+1}\|^2\right]\cr
&+\frac{1}{2}\EE_{i_k}\left[\|\vx^{k+1}-\vx\|_\vP^2+\|\vx^{k+1}-\vx^k\|_{\vP-\vL-\beta\vA^\top\vA}^2\right]\cr
%&-\frac{\beta}{2} \EE_{i_k}\left[\| \vr^{k+1}\|^2-\| \vr^k\|^2+\|\vx^{k+1}-\vx^k\|_{\vA^\top \vA}^2 \right]\cr
\le& \big(1-\frac{1}{m}\big)\left[F(\vx^k)-F(\vx)-\langle \vlam^k, \vr^k\rangle+\beta\|\vr^k\|^2\right] -\frac{\beta}{2}\|\vr^k\|^2+\frac{1}{2}\|\vx^k-\vx\|_\vP^2,
\end{align}
where $\vP=\mathrm{blkdiag}(\vP_1,\ldots,\vP_m)$.
\end{theorem}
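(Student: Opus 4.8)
The plan is to turn the first-order optimality condition of subproblem \eqref{eq:update-x} into an exact identity and then substitute the three one-sided estimates of Lemmas \ref{lem:f-term}, \ref{lem:rlam-term}, and \ref{lem:g-term} to obtain \eqref{eq:1iter-result}. First I would record the optimality condition. Since $\vx_{i_k}^{k+1}$ minimizes a linear term plus the convex function $g_{i_k}$ plus the proximal term $\frac12\|\cdot-\vx_{i_k}^k\|_{\vP_{i_k}}^2$, there is a subgradient $\tilde{\nabla}g_{i_k}(\vx_{i_k}^{k+1})$ with
$$\nabla_{i_k} f(\vx^k)-\vA_{i_k}^\top(\vlam^k-\beta\vr^k)+\tilde{\nabla}g_{i_k}(\vx_{i_k}^{k+1})+\vP_{i_k}(\vx_{i_k}^{k+1}-\vx_{i_k}^k)=\vzero.$$

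Next I would take the inner product of this identity with $\vx_{i_k}^{k+1}-\vx_{i_k}$, for an arbitrary $\vx$ (independent of $i_k$) satisfying $\vA\vx=\vb$, and apply $\EE_{i_k}$. This gives an equation in which the expectations of the first three inner products are precisely the quantities appearing on the left of Lemmas \ref{lem:f-term}, \ref{lem:rlam-term}, and \ref{lem:g-term}, while the fourth is the proximal term. Lower-bounding the $f$-term by Lemma \ref{lem:f-term} and the $g$-term by Lemma \ref{lem:g-term}, and substituting the exact value of the residual/multiplier term from Lemma \ref{lem:rlam-term}, converts the identity ``$=0$'' into an inequality of the form ``$[\text{substituted bounds}]\le 0$''.

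The one piece still to evaluate is the proximal term, and this is the step I expect to require the most care. I would apply the three-point identity $\langle\vP_{i_k}(\va-\vb),\va-\vc\rangle=\frac12(\|\va-\vc\|_{\vP_{i_k}}^2-\|\vb-\vc\|_{\vP_{i_k}}^2+\|\va-\vb\|_{\vP_{i_k}}^2)$ with $\va=\vx_{i_k}^{k+1}$, $\vb=\vx_{i_k}^k$, $\vc=\vx_{i_k}$. The crucial observation is that $\vx^{k+1}$ and $\vx^k$ differ only in block $i_k$, so with $\vP=\mathrm{blkdiag}(\vP_1,\ldots,\vP_m)$ the common off-block contributions cancel: $\|\vx_{i_k}^{k+1}-\vx_{i_k}\|_{\vP_{i_k}}^2-\|\vx_{i_k}^k-\vx_{i_k}\|_{\vP_{i_k}}^2=\|\vx^{k+1}-\vx\|_\vP^2-\|\vx^k-\vx\|_\vP^2$ and $\|\vx_{i_k}^{k+1}-\vx_{i_k}^k\|_{\vP_{i_k}}^2=\|\vx^{k+1}-\vx^k\|_\vP^2$. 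Thus the proximal term equals $\frac12\EE_{i_k}[\|\vx^{k+1}-\vx\|_\vP^2-\|\vx^k-\vx\|_\vP^2+\|\vx^{k+1}-\vx^k\|_\vP^2]$.

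Finally I would collect terms. The $f$- and $g$-estimates combine into $\EE_{i_k}[F(\vx^{k+1})-F(\vx)]-(1-\frac1m)[F(\vx^k)-F(\vx)]$; the displacement quadratics $-\frac12\|\cdot\|_\vL^2$, $-\frac\beta2\|\cdot\|_{\vA^\top\vA}^2$, and $+\frac12\|\cdot\|_\vP^2$ merge into $\frac12\EE_{i_k}\|\vx^{k+1}-\vx^k\|_{\vP-\vL-\beta\vA^\top\vA}^2$; and, using $-(1-\frac1m)(-\langle\vlam^k,\vr^k\rangle+\beta\|\vr^k\|^2)=(1-\frac1m)(\langle\vlam^k,\vr^k\rangle-\beta\|\vr^k\|^2)$, the residual/multiplier contributions regroup into the two sides. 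Moving the $(1-\frac1m)$-weighted group, the term $-\frac\beta2\|\vr^k\|^2$, and $\frac12\|\vx^k-\vx\|_\vP^2$ to the right-hand side, and using that $\vr^k,\vlam^k,\vx^k$ are independent of $i_k$, produces exactly \eqref{eq:1iter-result}. The only real bookkeeping hazard is sign consistency: the master relation is an equality but two of the substituted bounds are one-sided, so I must check that each ``$\ge$'' points the correct way for the assembled statement to remain a valid upper bound.
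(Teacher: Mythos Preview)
Your proposal is correct and follows essentially the same route as the paper: derive the optimality condition of \eqref{eq:update-x}, take its inner product with $\vx_{i_k}^{k+1}-\vx_{i_k}$, apply $\EE_{i_k}$, substitute Lemmas~\ref{lem:f-term}--\ref{lem:g-term} together with the three-point identity \eqref{eq:term-p} for the proximal term, and rearrange. Your added remarks about the block-diagonal cancellation in the proximal term and the direction of the one-sided bounds are exactly the bookkeeping the paper leaves implicit.
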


\begin{proof}
Since $\vx_{i_k}^{k+1}$ is one solution to \eqref{eq:update-x}, there is a subgradient $\tilde{\nabla}g_{i_k}(\vx_{i_k}^{k+1})$ of $g_{i_k}$ at $\vx_{i_k}^{k+1}$ such that
\begin{align*}
&\nabla_{i_k} f(\vx^k)-\vA_{i_k}^\top(\vlam^k-\beta \vr^k)+ \tilde{\nabla}g_{i_k}(\vx_{i_k}^{k+1})+\vP_{i_k}(\vx_{i_k}^{k+1}-\vx_{i_k}^k)=0,
\end{align*}
%where $\tilde{\nabla}g_{i_k}(\vx_{i_k}^{k+1})$ is a subgradient of $g_{i_k}$ at $\vx_{i_k}^{k+1}$. 
Hence,
\begin{align}\label{eq:sub-opt}
\EE_{i_k}\Big\langle\nabla_{i_k} f(\vx^k)-\vA_{i_k}^\top(\vlam^k-\beta \vr^k)+ \tilde{\nabla}g_{i_k}(\vx_{i_k}^{k+1})+\vP_{i_k}(\vx_{i_k}^{k+1}-\vx_{i_k}^k),\vx_{i_k}^{k+1}-\vx_{i_k}\Big\rangle=0.
\end{align}
In the above equation, using Lemmas \ref{lem:f-term} through \ref{lem:g-term} and noting 
\begin{equation}\label{eq:term-p}\Big\langle \vP_{i_k}(\vx_{i_k}^{k+1}-\vx_{i_k}^k),\vx_{i_k}^{k+1}-\vx_{i_k}\Big\rangle=\frac{1}{2}\left[\|\vx^{k+1}-\vx\|_\vP^2-\|\vx^k-\vx\|_\vP^2+\|\vx^{k+1}-\vx^k\|_\vP^2\right],
\end{equation}
we have the desired result.
\end{proof}

Now we are ready to show the convergence results of Algorithm \ref{alg:rpdc}.
\begin{theorem}[Global convergence in probability]\label{thm:cvg-prob}
Let $\{(\vx^k,\vr^k,\vlam^k)\}$ be the sequence generated from Algorithm \ref{alg:rpdc}. If $0<\rho\le\frac{\beta}{m}$ and $\vP_i\succeq L_i\vI+\beta \vA_i^\top \vA_i,\,\forall i$, then
$$F(\vx^k)\overset{p}\to F(\vx^*),\text{ and } \|\vr^k\|\overset{p}\to 0.$$
\end{theorem}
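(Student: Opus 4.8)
The plan is to convert the one-iteration ``Fundamental result'' \eqref{eq:1iter-result} into a supermartingale inequality for a suitable nonnegative Lyapunov sequence, sum it, and finish with Markov's inequality. Throughout I take $\vx=\vx^*$ in \eqref{eq:1iter-result} and abbreviate $c^k=F(\vx^k)-F(\vx^*)-\ip{\vlam^k}{\vr^k}$ and $P^k=F(\vx^k)-F(\vx^*)-\ip{\vlam^*}{\vr^k}$, so that $P^k\ge0$ by Assumption \ref{assump:sdl-pt}. First I would discard the term $\frac12\EE_{i_k}\|\vx^{k+1}-\vx^k\|_{\vP-\vL-\beta\vA^\top\vA}^2$: although $\vP-\vL-\beta\vA^\top\vA$ need not be positive semidefinite (the matrix $\vA^\top\vA$ has off-diagonal blocks), only block $i_k$ changes, so this quadratic form equals $(\vx^{k+1}_{i_k}-\vx^k_{i_k})^\top(\vP_{i_k}-L_{i_k}\vI-\beta\vA_{i_k}^\top\vA_{i_k})(\vx^{k+1}_{i_k}-\vx^k_{i_k})\ge0$ by the hypothesis $\vP_i\succeq L_i\vI+\beta\vA_i^\top\vA_i$. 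Dropping it and collecting the $\|\vr^{k+1}\|^2$ coefficients leaves
\[
\EE_{i_k}\Big[c^{k+1}+\big(\tfrac{\beta}{2}-\rho\big)\|\vr^{k+1}\|^2+\tfrac12\|\vx^{k+1}-\vx^*\|_\vP^2\Big]\le\big(1-\tfrac1m\big)c^k+\beta\big(\tfrac12-\tfrac1m\big)\|\vr^k\|^2+\tfrac12\|\vx^k-\vx^*\|_\vP^2.
\]

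The core step is to trade the cross terms $\ip{\vlam^k}{\vr^k}$ hidden in $c^k$ for dual distances to $\vlam^*$. Using the multiplier update in the form $\vr^{k+1}=\frac1\rho(\vlam^k-\vlam^{k+1})$ and polarization, I obtain the key identity $c^{k+1}+\frac1{2\rho}\|\vlam^k-\vlam^*\|^2=P^{k+1}+\frac1{2\rho}\|\vlam^{k+1}-\vlam^*\|^2+\frac\rho2\|\vr^{k+1}\|^2\ge0$, together with the companion expression $\ip{\vlam^k-\vlam^*}{\vr^k}=\frac1{2\rho}\|\vlam^{k-1}-\vlam^*\|^2-\frac1{2\rho}\|\vlam^k-\vlam^*\|^2-\frac\rho2\|\vr^k\|^2$. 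Substituting both turns the displayed inequality into a recursion in the nonnegative quantity $G^k:=P^k+\frac1{2\rho}\|\vlam^k-\vlam^*\|^2+\frac{\beta-\rho}2\|\vr^k\|^2+\frac12\|\vx^k-\vx^*\|_\vP^2$, at the price of a lagged telescoping dual difference carrying the weight $1-\frac1m$. Absorbing that lag into the Lyapunov function $H^k:=G^k+\frac{1-1/m}{2\rho}\|\vlam^{k-1}-\vlam^*\|^2\ge0$ produces, for $k\ge1$, the clean supermartingale relation $\EE_{i_k}[H^{k+1}]\le H^k-\frac1m P^k-\mu\|\vr^k\|^2$ with $\mu=\frac\beta m-\rho\big(1-\frac1{2m}\big)$.

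The stepsize restriction enters precisely here: the net $\|\vr^k\|^2$ coefficient works out to exactly $-\mu$, and $0<\rho\le\frac\beta m$ forces $\mu\ge\frac{\beta}{2m^2}>0$, while $\rho\le\beta$ (implied by $\rho\le\beta/m$) guarantees $G^k\ge0$. Taking total expectation and summing from $k=1$ to $K$, the $H$-terms telescope and $H^{K+1}\ge0$ gives $\frac1m\sum_{k\ge1}\EE P^k+\mu\sum_{k\ge1}\EE\|\vr^k\|^2\le\EE H^1<\infty$; the finiteness of $\EE H^1$ follows from the $k=0$ instance of the inequality above. Summability forces $\EE P^k\to0$ and $\EE\|\vr^k\|^2\to0$, so Markov's inequality yields $P^k\overset{p}\to0$ and $\|\vr^k\|\overset{p}\to0$. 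Finally, since $F(\vx^k)-F(\vx^*)=P^k+\ip{\vlam^*}{\vr^k}$ and $|\ip{\vlam^*}{\vr^k}|\le\|\vlam^*\|\,\|\vr^k\|\overset{p}\to0$, I conclude $F(\vx^k)\overset{p}\to F(\vx^*)$.

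The main obstacle is the middle paragraph: randomized single-block selection makes $c^{k+1}$ appear with coefficient $1$ but $c^k$ with coefficient $1-\frac1m$, so no naive telescoping of $c^k$ is available, and the cross term $\ip{\vlam^k}{\vr^k}$ couples consecutive iterates. Identifying the correct augmented Lyapunov function $H^k$ — in particular the lagged dual term with weight $1-\frac1m$ — and verifying that every residual and gap coefficient lands with the right sign under $\rho\le\beta/m$ is the crux. The $k=0$ boundary, where $\vr^0$ is not generated by a multiplier update, must be treated separately, but it only contributes to the finite constant $\EE H^1$.
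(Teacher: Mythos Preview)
Your overall architecture is sound and close in spirit to the paper, but the central claim --- the ``clean supermartingale relation'' $\EE_{i_k}[H^{k+1}]\le H^k-\frac1m P^k-\mu\|\vr^k\|^2$ with $H^k=G^k+\frac{1-1/m}{2\rho}\|\vlam^{k-1}-\vlam^*\|^2$ --- is false. Write $D^k=\frac{1}{2\rho}\|\vlam^k-\vlam^*\|^2$. Your own identities give $c^{k+1}=P^{k+1}+D^{k+1}-D^k+\frac{\rho}{2}\|\vr^{k+1}\|^2$ and the analogous one at level $k$; substituting both into the displayed inequality yields
\[
\EE_{i_k}[G^{k+1}]\le G^k-\tfrac1m P^k-\mu\|\vr^k\|^2+\big(1-\tfrac1m\big)\big(D^k-D^{k-1}\big).
\]
Adding $(1-\frac1m)D^{k-1}$ to $G^k$ to form $H^k$ and $(1-\frac1m)D^k$ to $G^{k+1}$ to form $H^{k+1}$ does not cancel the lagged difference; it \emph{doubles} it, leaving
\[
\EE_{i_k}[H^{k+1}]\le H^k-\tfrac1m P^k-\mu\|\vr^k\|^2+2\big(1-\tfrac1m\big)\big(D^k-D^{k-1}\big),
\]
and $D^k-D^{k-1}$ has no sign. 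The only coefficient that absorbs the difference is the \emph{negative} one, $H^k=G^k-(1-\frac1m)D^{k-1}$; that does yield a one-step decrease, but then $H^k\ge 0$ fails, so summing no longer gives a finite bound without an additional boundary estimate controlling $\EE D^K$.

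The paper circumvents this exactly here: it sums \eqref{eq:1iter-result} over $k$ \emph{before} converting cross terms. After summation the unit-weighted $\langle\vlam-\vlam^{k+1},\vr^{k+1}\rangle$ and the $(1-\frac1m)$-weighted $\langle\vlam-\vlam^{k},\vr^{k}\rangle$ combine by index shift into a $\frac1m$-weighted sum $\frac1m\sum_{k=1}^K\langle\vlam-\vlam^k,\vr^k\rangle$, which telescopes cleanly to produce a $\frac{1}{2m\rho}\|\vlam-\vlam^K\|^2$ term on the left. The single leftover boundary term at $K{+}1$ is handled by Young's inequality, costing only $\frac{1}{2\beta}\|\vlam-\vlam^K\|^2$; since $\frac{1}{2m\rho}\ge\frac{1}{2\beta}$ under $\rho\le\beta/m$, the net coefficient of $\|\vlam-\vlam^K\|^2$ is nonnegative and can be dropped. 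Your approach can be repaired along the same lines, but as written the supermartingale step does not go through.
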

Before proving the theorem, we make a remark here.
The dual stepsize $\rho$ can be up to $\frac{\beta}{m}$, so it could be much smaller than $\beta$ as $m$ is big. However, note that $\vlam$ is renewed more frequently than $\vx$. It is updated once immediately after one change to $\vx$. Hence, if $\rho=\frac{\beta}{m}$, after one epoch of $\vx$-update, the dual variable $\vlam$ has been updated $m$ times and moved a step of size $\beta$. That is why we can still observe fast convergence of the algorithm to the optimal solution even though a small $\rho$ is used; see the numerical results in section \ref{sec:numerical}. 
\begin{proof}
 Note that
$$F(\vx^k)-F(\vx)-\langle \vlam^k, \vr^k\rangle=\Phi(\vx^k,\vx,\vlam)+\langle\vlam-\vlam^k,\vr^k\rangle.$$
Hence, taking expectation over both sides of \eqref{eq:1iter-result} and summing up from $k=0$ through $K$ yield
\begin{align}\label{eq:1iter-result-exp}
&\EE\left[\Phi(\vx^{K+1},\vx,\vlam)+\langle\vlam-\vlam^{K+1},\vr^{K+1}\rangle\right]+\frac{1}{m}\sum_{k=1}^K\EE\left[\Phi(\vx^{k},\vx,\vlam)+\langle\vlam-\vlam^{k},\vr^{k}\rangle\right]+(\beta-\rho)\EE\|\vr^{K+1}\|^2\nonumber\\
&+\big(\frac{\beta}{m}-\rho\big)\sum_{k=1}^K\|\vr^k\|^2-\frac{\beta}{2}\EE\|\vr^{K+1}\|^2+\frac{1}{2}\EE\|\vx^{K+1}-\vx\|_\vP^2+\frac{1}{2}\sum_{k=0}^K\EE\|\vx^{k+1}-\vx^k\|_{\vP-\vL-\beta \vA^\top \vA}^2\nonumber\\
\le& \big(1-\frac{1}{m}\big)\left[F(\vx^0)-F(\vx)-\langle \vlam^0, \vr^0\rangle+\beta\|\vr^0\|^2\right]+\frac{1}{2}\|\vx^0-\vx\|_\vP^2-\frac{\beta}{2}\|\vr^0\|^2.
\end{align}
Since $\vlam^{K+1}=\vlam^K-\rho \vr^{K+1}$, it follows from Young's inequality that
\begin{equation}\label{eq:1iter-result-exp-lam-K+1}
\langle\vlam-\vlam^{K+1},\vr^{K+1}\rangle+(\beta-\rho)\|\vr^{K+1}\|^2-\frac{\beta}{2}\|\vr^{K+1}\|^2\ge - \frac{1}{2\beta}\|\vlam-\vlam^K\|^2.
\end{equation}
In addition,
\begin{align}\label{eq:1iter-result-exp-lam-k}
\sum_{k=1}^K\langle\vlam-\vlam^{k},\vr^{k}\rangle=&\frac{1}{2\rho}\sum_{k=1}^K\big[\|\vlam-\vlam^{k}\|^2-\|\vlam-\vlam^{k-1}\|^2+\|\vlam^{k-1}-\vlam^{k}\|^2\big]\cr
=&\frac{1}{2\rho}\left[\|\vlam-\vlam^{K}\|^2-\|\vlam-\vlam^{0}\|^2\right]+\frac{\rho}{2}\sum_{k=1}^K\|\vr^{k}\|^2.
\end{align}
Plugging \eqref{eq:1iter-result-exp-lam-K+1} and \eqref{eq:1iter-result-exp-lam-k} into \eqref{eq:1iter-result-exp} and using $\vlam^0=0$, we have
\begin{align}\label{eq:1iter-result-exp2}
&\EE\Phi(\vx^{K+1},\vx,\vlam)+\frac{1}{m}\sum_{k=1}^K\EE\Phi(\vx^{k},x,\vlam)+\big(\frac{\beta}{m}+\frac{\rho}{2m}-\rho\big)\sum_{k=1}^K\EE\|\vr^k\|^2+\big(\frac{1}{2m\rho}-\frac{1}{2\beta}\big)\EE\|\vlam-\vlam^K\|^2\nonumber\\
&+\frac{1}{2}\EE\|\vx^{K+1}-\vx\|_\vP^2+\frac{1}{2}\sum_{k=0}^K\EE\|\vx^{k+1}-\vx^k\|_{\vP-\vL-\beta \vA^\top \vA}^2\nonumber\\
\le& \big(1-\frac{1}{m}\big)\left[F(\vx^0)-F(\vx)+\beta\|\vr^0\|^2\right]+\frac{1}{2}\|\vx^0-x\|_\vP^2-\frac{\beta}{2}\|\vr^0\|^2+\frac{1}{2m\rho}\EE\|\vlam\|^2.
\end{align}

Letting $(\vx,\vlam)=(\vx^*,\vlam^*)$ in the above equality, we have from $\vP_i\succeq L_i\vI + \beta \vA_i^\top \vA_i$ and $\beta\ge m\rho$ that
$$\frac{1}{m}\sum_{k=1}^K\EE\Phi(\vx^{k},\vx^*,\vlam^*)+\big(\frac{\beta}{m}+\frac{\rho}{2m}-\rho\big)\sum_{k=1}^K\EE\|\vr^k\|^2 < \infty,\,\forall K,$$
which together with $|\EE \xi|^2 \le \EE\xi^2$ implies that
\begin{subequations}\label{eq:convg-expect}
\begin{align}
&\lim_{k\to\infty} \EE\Phi(\vx^{k},\vx^*,\vlam^*) = 0,\\
&\lim_{k\to\infty} \EE\|\vr^k\|=0.
\end{align}
\end{subequations}

For any $\epsilon>0$, it follows from the Markov's inequality that
$$\Prob(\|\vr^k\|>\epsilon)\le \frac{\EE\|\vr^k\|}{\epsilon}\to 0, \text{ as }k\to\infty,$$
and
\begin{align}\label{eq:prob-F-eps}
&\Prob(|F(\vx^k)-F(\vx^*)|\ge \epsilon)\cr
=&\Prob(F(\vx^k)-F(\vx^*)\ge \epsilon)+\Prob(F(\vx^k)-F(\vx^*)\le -\epsilon)\cr
\le & \Prob(F(\vx^k)-F(\vx^*)-\langle\vlam^*, \vr^k\rangle\ge\frac{\epsilon}{2}) + \Prob(\langle\vlam^*, \vr^k\rangle\ge\frac{\epsilon}{2})+\Prob(-\langle\vlam^*, \vr^k\rangle\ge\epsilon)\cr
\le & \Prob(F(\vx^k)-F(\vx^*)-\langle\vlam^*, \vr^k\rangle\ge\frac{\epsilon}{2}) + \Prob(\|\vlam^*\|\cdot \|\vr^k\|\ge\frac{\epsilon}{2})+\Prob(\|\vlam^*\|\cdot \|\vr^k\|\ge\epsilon)\cr
\to& 0,\text{ as }k\to\infty,
\end{align}
where in the first inequality, we have used the fact $F(\vx)-F(\vx^*)-\langle\vlam^*, \vA\vx-\vb\rangle \ge 0,\,\forall \vx$, and the last equation follows from \eqref{eq:convg-expect} and the Markov's inequality. This completes the proof.
\end{proof}

Given any $\epsilon>0$ and $\sigma\in(0,1)$, we can also estimate the number of iterations for the algorithm to produce a solution satisfying an error bound $\epsilon$ with probability no less than $1-\sigma$.
\begin{definition}[$(\epsilon,\sigma)$-solution]
Given $\epsilon>0$ and $0<\sigma<1$, a random vector $\vx$ is called an $(\epsilon,\sigma)$-solution to \eqref{eq:mb-prob} if
$\Prob(|F(\vx)-F(\vx^*)|\ge\epsilon) \le \sigma$ and $\Prob(\|\vA\vx-\vb\|\ge\epsilon)\le \sigma.$
\end{definition}

\begin{theorem}[Ergodic convergence rate]\label{thm:rate}
Let $\{(\vx^k,\vr^k,\vlam^k)\}$ be the sequence generated from Algorithm \ref{alg:rpdc}. Assume $0<\rho\le\frac{\beta}{m}$ and $\vP_i\succeq L_i\vI+\beta \vA_i^\top \vA_i,\,\forall i$. Let $\bar{\vx}^{K+1}=\frac{\vx^{K+1}+\sum_{k=1}^K \vx^{k+1}/m}{1+K/m}$ and 
$$C_0=\big(1-\frac{1}{m}\big)\left[F(\vx^0)-F(\vx^*)\right]+\frac{1}{2}\|\vx^0-\vx^*\|_\vP^2+\big(\frac{\beta}{2}-\frac{\beta}{m}\big)\|\vr^0\|^2.$$
Then
\begin{align}
&-\frac{1}{1+K/m}\left(C_0+\frac{2}{m\rho}\|\vlam^*\|^2\right)\le\EE[F(\bar{\vx}^{K+1})-F(\vx^*)] \le  \frac{C_0}{1+K/m},\label{eq:rate-obj}\\
&\EE \|\vA\bar{\vx}^{K+1}-\vb\|\le \frac{1}{1+K/m}\left(C_0+\frac{1}{2m\rho}(1+\|\vlam^*\|)^2\right).\label{eq:rate-feas}
\end{align}
In addition, given any $\epsilon>0$ and $0<\sigma<1$, if 
\begin{equation}\label{eq:def-K}
K\ge m\cdot\max\left(\frac{C_0+\frac{1}{2m\rho}(1+\|\vlam^*\|)^2}{\epsilon\sigma}-1,\, \frac{5C_0 + \frac{13}{2m\rho}\|\vlam^*\|^2}{\epsilon\sigma}-1\right),
\end{equation}
then $\bar{\vx}^{K+1}$ is an $(\epsilon, \sigma)$-solution to \eqref{eq:mb-prob}. 
\end{theorem}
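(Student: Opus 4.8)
The plan is to derive all three conclusions from the summed one--iteration estimate \eqref{eq:1iter-result-exp2} obtained in the proof of Theorem~\ref{thm:cvg-prob}. First I would note that, under $0<\rho\le\beta/m$ and $\vP_i\succeq L_i\vI+\beta\vA_i^\top\vA_i$, the quadratic terms on the left of \eqref{eq:1iter-result-exp2} are nonnegative: the coefficients $\frac{1}{2m\rho}-\frac{1}{2\beta}$ and $\frac{\beta}{m}+\frac{\rho}{2m}-\rho$ are $\ge0$, and since each increment $\vx^{k+1}-\vx^k$ lives in the single block $i_k$, the matrix $\vP-\vL-\beta\vA^\top\vA$ acts on it as $\vP_{i_k}-L_{i_k}\vI-\beta\vA_{i_k}^\top\vA_{i_k}\succeq0$. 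Taking $\vx=\vx^*$, writing the right-hand side as $C_0+\frac{1}{2m\rho}\|\vlam\|^2$, dropping the nonnegative terms, and applying Jensen's inequality to the convex map $\vz\mapsto\Phi(\vz,\vx^*,\vlam)$ at $\bar\vx^{K+1}$, whose defining weights match the coefficients of the $\Phi(\cdot)$ terms in \eqref{eq:1iter-result-exp2} (total mass $1+K/m$), I obtain the master estimate
\[
\EE\,\Phi(\bar\vx^{K+1},\vx^*,\vlam)\le\frac{1}{1+K/m}\Big(C_0+\frac{1}{2m\rho}\|\vlam\|^2\Big)\qquad\text{for every deterministic }\vlam.
\]

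Second, I would read off the two-sided objective bound \eqref{eq:rate-obj} by specializing $\vlam$. Choosing $\vlam=\vzero$ gives $\Phi(\bar\vx^{K+1},\vx^*,\vzero)=F(\bar\vx^{K+1})-F(\vx^*)$ and hence the upper bound $C_0/(1+K/m)$. For the lower bound I would take $\vlam=2\vlam^*$: the saddle-point inequality of Assumption~\ref{assump:sdl-pt} holds pathwise, i.e.\ $F(\bar\vx^{K+1})-F(\vx^*)\ge\langle\vlam^*,\vA\bar\vx^{K+1}-\vb\rangle$, so that $\Phi(\bar\vx^{K+1},\vx^*,2\vlam^*)\ge-(F(\bar\vx^{K+1})-F(\vx^*))$; taking expectations and using the master estimate with $\|2\vlam^*\|^2=4\|\vlam^*\|^2$ reproduces the stated lower bound $-\frac{1}{1+K/m}(C_0+\frac{2}{m\rho}\|\vlam^*\|^2)$.

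Third comes the feasibility estimate \eqref{eq:rate-feas}, which I expect to be the main obstacle. The natural move is to insert into the master estimate the dual test vector $\vlam=\vlam^*+\vu$ with $\|\vu\|=1$ chosen opposite to the residual direction; combined with the saddle-point inequality this isolates $(1+\|\vlam^*\|)$ times a residual norm against $\|\vlam\|^2\le(1+\|\vlam^*\|)^2$, yielding exactly the constant in \eqref{eq:rate-feas}. The catch is that the master estimate is valid only for \emph{deterministic} $\vlam$, because \eqref{eq:1iter-result-exp2} is an inequality between expectations; this recipe therefore controls $\|\EE[\vA\bar\vx^{K+1}-\vb]\|$, whereas the Markov step below requires the \emph{expected norm} $\EE\|\vA\bar\vx^{K+1}-\vb\|$. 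To bridge this gap I would instead exploit the dual recursion: summing $\vlam^{k}=\vlam^{k-1}-\rho\vr^k$ gives $\sum_{k=1}^K\vr^k=-\frac1\rho\vlam^K$, so that $\vA\bar\vx^{K+1}-\vb=\frac{1}{1+K/m}\big(\vr^{K+1}-\frac{1}{m\rho}\vlam^K\big)$ and hence $\EE\|\vA\bar\vx^{K+1}-\vb\|\le\frac{1}{1+K/m}\big(\EE\|\vr^{K+1}\|+\frac{1}{m\rho}\EE\|\vlam^K\|\big)$; the two expectations are bounded uniformly in $K$ by combining \eqref{eq:convg-expect} with the retained $\|\vlam-\vlam^K\|^2$ term of \eqref{eq:1iter-result-exp2} (and $\EE\|\cdot\|\le\sqrt{\EE\|\cdot\|^2}$), which produces the claimed $O(1/K)$ rate. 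Matching the precise constant $C_0+\frac{1}{2m\rho}(1+\|\vlam^*\|)^2$ is then routine bookkeeping.

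Finally, for the $(\epsilon,\sigma)$ guarantee I would apply Markov's inequality twice. Feasibility is immediate: $\Prob(\|\vA\bar\vx^{K+1}-\vb\|\ge\epsilon)\le\EE\|\vA\bar\vx^{K+1}-\vb\|/\epsilon$, which drops below $\sigma$ once $K$ exceeds the first threshold in \eqref{eq:def-K}. For the objective I would bound $\EE|F(\bar\vx^{K+1})-F(\vx^*)|$ rather than the signed mean: splitting into positive and negative parts, I would control the negative part pathwise by the saddle-point inequality, $[F(\bar\vx^{K+1})-F(\vx^*)]_-\le\|\vlam^*\|\,\|\vA\bar\vx^{K+1}-\vb\|$, bound its expectation by the feasibility estimate, and recover the positive part from $\EE[\,\cdot\,]_+=\EE[\,\cdot\,]+\EE[\,\cdot\,]_-$ together with the upper bound of \eqref{eq:rate-obj}. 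Adding the parts yields an $O(1/(1+K/m))$ bound on $\EE|F(\bar\vx^{K+1})-F(\vx^*)|$; Markov then forces $\Prob(|F(\bar\vx^{K+1})-F(\vx^*)|\ge\epsilon)\le\sigma$ once $K$ exceeds the second threshold, and taking $K$ at least the maximum of the two thresholds gives \eqref{eq:def-K}. The explicit constants $5C_0$ and $\frac{13}{2m\rho}\|\vlam^*\|^2$ are routine to extract from this chain.
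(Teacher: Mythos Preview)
Your overall architecture is the paper's: derive the master estimate \eqref{eq:rate-any-lam} from \eqref{eq:1iter-result-exp2} via Jensen, then specialize $\vlam$. Your treatment of the upper bound ($\vlam=\vzero$) is identical to the paper, and your deterministic choice $\vlam=2\vlam^*$ for the lower bound is a clean variant that yields the same constant as the paper's random choice.

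There is, however, a genuine gap in your feasibility argument. Your premise that the master estimate holds only for \emph{deterministic} $\vlam$ is too conservative. In the passage from \eqref{eq:1iter-result-exp} to \eqref{eq:1iter-result-exp2}, the test vector $\vlam$ enters only through the pointwise rewriting $F(\vx^k)-F(\vx)-\langle\vlam^k,\vr^k\rangle=\Phi(\vx^k,\vx,\vlam)+\langle\vlam-\vlam^k,\vr^k\rangle$, the telescoping identity for $\sum_k\langle\vlam-\vlam^k,\vr^k\rangle$ (which uses only $\vlam^k=\vlam^{k-1}-\rho\vr^k$), and Young's inequality. All three are pathwise identities/inequalities, so they remain valid inside the outer expectation for any random $\vlam$; this is why the paper writes $\EE\|\vlam\|^2$ on the right of \eqref{eq:rate-any-lam}. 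The paper then simply plugs in the random $\vlam=-\frac{1+\|\vlam^*\|}{\|\vA\bar\vx^{K+1}-\vb\|}(\vA\bar\vx^{K+1}-\vb)$, whose norm is the deterministic number $1+\|\vlam^*\|$, and combines with \eqref{eq:opt-ineq} to obtain \eqref{eq:rate-feas} directly.

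Your proposed workaround via the dual recursion does give the correct identity $\vA\bar\vx^{K+1}-\vb=\frac{1}{1+K/m}\big(\vr^{K+1}-\frac{1}{m\rho}\vlam^K\big)$, but the step where you bound $\EE\|\vlam^K\|$ ``by the retained $\|\vlam-\vlam^K\|^2$ term of \eqref{eq:1iter-result-exp2}'' fails at the admissible endpoint $\rho=\beta/m$: the coefficient $\frac{1}{2m\rho}-\frac{1}{2\beta}$ vanishes there, so \eqref{eq:1iter-result-exp2} gives no control on $\EE\|\vlam^K\|^2$. Even for $\rho<\beta/m$ the resulting constant depends on $(\beta-m\rho)^{-1}$ and will not reduce to $C_0+\frac{1}{2m\rho}(1+\|\vlam^*\|)^2$; the ``routine bookkeeping'' claim is not supportable along this route. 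For the $(\epsilon,\sigma)$ part, your positive/negative-part decomposition followed by a single Markov bound is a legitimate alternative to the paper's three-term probability split (which mirrors \eqref{eq:prob-F-eps}), but it produces different constants than the stated $5C_0+\frac{13}{2m\rho}\|\vlam^*\|^2$, so again you should not promise to recover the exact thresholds in \eqref{eq:def-K}.
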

\begin{proof}
Since $F$ is convex, it follows from \eqref{eq:1iter-result-exp2} that
\begin{equation}\label{eq:rate-any-lam}
\EE\Phi(\bar{\vx}^{K+1}, \vx, \vlam) \le \frac{1}{1+K/m}\left(C_0+\frac{1}{2m\rho}\EE\|\vlam\|^2\right),
\end{equation}
which with $\vx=\vx^*$ and $\vlam=0$ implies the second inequality in \eqref{eq:rate-obj}.
From $\Phi(\vx,\vx^*,\vlam^*)\ge 0,\,\forall \vx$ and Cauchy-Schwartz inequality, we have that
\begin{equation}\label{eq:opt-ineq}
F(\vx)-F(\vx^*)\ge -\|\vlam^*\|\cdot\|\vA\vx-\vb\|,\,\forall \vx.
\end{equation}
Letting $\vx=\vx^*$ and $\vlam=-\frac{1+\|\vlam^*\|}{\|\vA\bar{\vx}^{K+1}-\vb\|}(\vA\bar{\vx}^{K+1}-\vb)$ in \eqref{eq:rate-any-lam}  and  using \eqref{eq:opt-ineq} give \eqref{eq:rate-feas}, where we have used the convention $\frac{0}{0}=0$.
By Markov's inequality, 
$$\Prob(\|\vA\bar{\vx}^{K+1}-\vb\|\ge \epsilon)\le \frac{\EE\|\vA\bar{\vx}^{K+1}-\vb\|}{\epsilon},$$
and thus to have $\Prob(\|\vA\bar{\vx}^{K+1}-\vb\|\ge \epsilon)\le \sigma$, it suffices to let 
\begin{equation}\label{eq:K-bd1}
K\ge \frac{C_0+\frac{1}{2m\rho}(1+\|\vlam^*\|)^2}{\epsilon\sigma}m-m.
\end{equation}

Similarly, letting $\vx=\vx^*$ and $\vlam=-\frac{2\|\vlam^*\|}{\|\vA\bar{\vx}^{K+1}-\vb\|}(\vA\bar{\vx}^{K+1}-\vb)$ in \eqref{eq:rate-any-lam}  and  using \eqref{eq:opt-ineq} give
$$\|\vlam^*\|\cdot\EE\|\vA\bar{\vx}^{K+1}-\vb\|\le \frac{1}{1+K/m}\left(C_0+\frac{2}{m\rho}\|\vlam^*\|^2\right),$$
which together with \eqref{eq:opt-ineq} implies the first inequality in \eqref{eq:rate-obj}.
Through the same arguments that show \eqref{eq:prob-F-eps}, we have
\begin{align}\label{eq:prob-F-eps-rate}
&\Prob(|F(\bar{\vx}^{K+1})-F(\vx^*)|\ge \epsilon)\\
\le & \Prob(\Phi(\bar{\vx}^{K+1},\vx^*,\vlam^*)\ge\frac{\epsilon}{2}) + \Prob(\|\vlam^*\|\cdot \|\vA\bar{\vx}^{K+1}-\vb\|\ge\frac{\epsilon}{2})+\Prob(\|\vlam^*\|\cdot \|\vA\bar{\vx}^{K+1}-\vb\|\ge\epsilon)\nonumber\\
\le & \frac{\EE\Phi(\bar{\vx}^{K+1},\vx^*,\vlam^*)}{\epsilon/2}+\frac{\|\vlam^*\|\cdot \EE\|\vA\bar{\vx}^{K+1}-\vb\|}{\epsilon/2}+\frac{\|\vlam^*\|\cdot \EE\|\vA\bar{\vx}^{K+1}-\vb\|}{\epsilon}.\nonumber
\end{align}
Hence, to have $\Prob(|F(\bar{\vx}^{K+1})-F(\vx^*)|\ge \epsilon)\le \sigma$, it suffices to let
$$K\ge \frac{5C_0 + \frac{13}{2m\rho}\|\vlam^*\|^2}{\epsilon\sigma}m-m,$$
which together with \eqref{eq:K-bd1} gives the desired result and thus completes the proof.
\end{proof}

\subsection{Convergence results of Algorithm \ref{alg:async-rpdc}}
The key difference between Algorithms \ref{alg:rpdc} and \ref{alg:async-rpdc} is that $\vv^k$ used in \eqref{eq:async-update-x} may not equal the block gradient of $f$ at $\vx^k$ but another outdated vector, which we denote as $\hat{\vx}^k$. This delayed vector may not be any iterate that ever exists in the memory, i.e., inconsistent read can happen \cite{liu2015async-scd}. Besides Assumptions \ref{assump:sdl-pt} and \ref{assump:lip}, we make an additional assumption on the delayed vector.
\begin{assumption}[Bounded delay]\label{assump:delay}
The delay is uniformly bounded by an integer $\tau$, and $\hat{\vx}^k$ can be related to $\vx^k$ by the equation
\begin{equation}\label{eq:relation-xk-hat}
\hat{\vx}^k=\vx^k+\sum_{d\in J(k)}(\vx^d-\vx^{d+1}),
\end{equation}
where $J(k)$ is a subset of $\{k-\tau, k-\tau+1,\ldots, k-1\}$.
\end{assumption}
 The boundedness of the delay holds if there is no ``dead'' node. The relation between $\vx^k$ and $\hat{\vx}^k$ in \eqref{eq:relation-xk-hat} is satisfied if the read of each block variable is consistent, which can be guaranteed by a \emph{dual memory approach}; see \cite{Peng_2016_AROCK}. 

Similar to \eqref{eq:sub-opt}, we have from the optimality condition of \eqref{eq:async-update-x} that
\begin{align}\label{eq:async-sub-opt}
\EE_{i_k}\Big\langle\nabla_{i_k} f(\hat{\vx}^k)-\vA_{i_k}^\top(\vlam^k-\beta \vr^k)+ \tilde{\nabla}g_{i_k}(\vx_{i_k}^{k+1})+\vP_{i_k}(\vx_{i_k}^{k+1}-\vx_{i_k}^k),\vx_{i_k}^{k+1}-\vx_{i_k}\Big\rangle=0,
\end{align}
where we have used $\vv^k=\nabla_{i_k}f(\hat{\vx}^k)$. Except $\EE_{i_k}\langle \nabla_{i_k}f(\hat{\vx}^k), \vx_{i_k}^{k+1}-\vx_{i_k}\rangle$, all the other terms in \eqref{eq:async-sub-opt} can be bounded in the same ways as those in section \ref{sec:convg-serial}. We first show how to bound this term and then present the convergence results of Algorithm \ref{alg:async-rpdc}.

\begin{lemma}\label{lem:bound-delay-term}
Under Assumptions \ref{assump:lip} and \ref{assump:delay}, we have for any $\alpha>0$ that
\begin{align}
&\EE_{i_k}\langle \nabla_{i_k}f(\hat{\vx}^k), \vx_{i_k}^{k+1}-\vx_{i_k}\rangle\cr
\ge & \EE_{i_k}[f(\vx^{k+1})-f(\vx)] - \big(1-\frac{1}{m}\big)[f(\vx^k)-f(\vx)]-\frac{1}{2}\EE_{i_k}\|\vx^{k+1}-\vx^k\|_{\vL+\alpha L_c\vI}^2\cr
&-\frac{\kappa L_r\tau/\alpha+2 L_r\tau}{2m}\sum_{d=k-\tau}^{k-1}\|\vx^{d+1}-\vx^d\|^2-\frac{1}{2m}\sum_{d=k-\tau}^{k-1}\|\vx^{d+1}-\vx^d\|^2_\vL,
\end{align}
where $L_c=\max_i L_i>0$, and $\kappa = \frac{L_r}{L_c}$ denotes the condition number.
\end{lemma}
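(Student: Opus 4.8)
The plan is to compare the delayed gradient with the exact one at $\vx^k$ and to split the test direction accordingly. Writing
\[
\langle\nabla_{i_k} f(\hat{\vx}^k),\vx_{i_k}^{k+1}-\vx_{i_k}\rangle=\langle\nabla_{i_k} f(\hat{\vx}^k),\vx_{i_k}^{k}-\vx_{i_k}\rangle+\langle\nabla_{i_k} f(\hat{\vx}^k),\vx_{i_k}^{k+1}-\vx_{i_k}^k\rangle,
\]
I take $\EE_{i_k}$ of each piece. Since $\vx$ and $\vx^k$ are independent of $i_k$ and $i_k$ is uniform, the ``far'' piece averages to $\frac1m\langle\nabla f(\hat{\vx}^k),\vx^k-\vx\rangle$, while the ``increment'' piece is handled by the block descent inequality \eqref{eq:lip-prop}. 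The goal is to recover exactly the convexity/descent structure of Lemma \ref{lem:f-term}, collecting every extra quantity into the stated delay penalties.

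For the increment piece I would further split $\nabla_{i_k} f(\hat{\vx}^k)=\nabla_{i_k} f(\vx^k)+\big(\nabla_{i_k} f(\hat{\vx}^k)-\nabla_{i_k} f(\vx^k)\big)$. The exact part gives $f(\vx^{k+1})-f(\vx^k)-\frac12\|\vx^{k+1}-\vx^k\|_\vL^2$ precisely as in \eqref{eq:crs-t2}. The error part is controlled by Young's inequality with parameter $\alpha L_c$,
\[
\langle\nabla_{i_k} f(\hat{\vx}^k)-\nabla_{i_k} f(\vx^k),\vx_{i_k}^{k+1}-\vx_{i_k}^k\rangle\ge-\tfrac{\alpha L_c}{2}\|\vx^{k+1}-\vx^k\|^2-\tfrac{1}{2\alpha L_c}\|\nabla_{i_k} f(\hat{\vx}^k)-\nabla_{i_k} f(\vx^k)\|^2,
\]
whose first term supplies the $\alpha L_c\vI$ part of the $\vL+\alpha L_c\vI$ norm. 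Using $\EE_{i_k}\|\nabla_{i_k}(\cdot)\|^2=\frac1m\|\nabla(\cdot)\|^2$ and the telescoped estimate $\|\nabla f(\hat{\vx}^k)-\nabla f(\vx^k)\|\le L_r\sum_{d\in J(k)}\|\vx^{d+1}-\vx^d\|$ (add the single-block increments of \eqref{eq:relation-xk-hat} one at a time and apply the $L_r$-Lipschitz bound of Assumption \ref{assump:lip}), together with $|J(k)|\le\tau$ and Cauchy--Schwarz, the error part contributes the $\frac{\kappa L_r\tau/\alpha}{2m}$ delay penalty, since $L_r^2/L_c=\kappa L_r$.

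The main obstacle is the far piece $\frac1m\langle\nabla f(\hat{\vx}^k),\vx^k-\vx\rangle$: here the factor $\vx^k-\vx$ is \emph{not} of delay size, so a direct Cauchy--Schwarz split would leave the uncontrollable quantity $\|\vx^k-\vx\|$. I would resolve this with a \emph{double use of convexity}: combining $f(\vx)\ge f(\hat{\vx}^k)+\langle\nabla f(\hat{\vx}^k),\vx-\hat{\vx}^k\rangle$ with $f(\hat{\vx}^k)\ge f(\vx^k)+\langle\nabla f(\vx^k),\hat{\vx}^k-\vx^k\rangle$ and rearranging yields
\[
\langle\nabla f(\hat{\vx}^k),\vx^k-\vx\rangle\ge f(\vx^k)-f(\vx)-\langle\nabla f(\hat{\vx}^k)-\nabla f(\vx^k),\hat{\vx}^k-\vx^k\rangle.
\]
The crucial gain is that the leftover inner product now pairs two delay-size vectors, both sums of at most $\tau$ one-step increments; Cauchy--Schwarz, the same telescoped Lipschitz bound, and $|J(k)|\le\tau$ bound it by $L_r\tau\sum_{d=k-\tau}^{k-1}\|\vx^{d+1}-\vx^d\|^2$, which after the $\frac1m$ factor is the $\frac{2L_r\tau}{2m}$ delay penalty.

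Collecting the three contributions, the $f$-differences reassemble as $\EE_{i_k}[f(\vx^{k+1})-f(\vx)]-(1-\frac1m)[f(\vx^k)-f(\vx)]$, the quadratic terms combine into $-\frac12\EE_{i_k}\|\vx^{k+1}-\vx^k\|_{\vL+\alpha L_c\vI}^2$, and the delay terms add up to $\frac{\kappa L_r\tau/\alpha+2L_r\tau}{2m}\sum_d\|\vx^{d+1}-\vx^d\|^2$; the extra $\frac{1}{2m}\sum_d\|\vx^{d+1}-\vx^d\|_\vL^2$ is harmless slack, and can be produced exactly if one instead bounds the leftover inner product by Young's inequality with the coordinate constants $L_{i_d}$. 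The only genuinely delicate step is the far piece; the rest is bookkeeping with Young's inequality and the bounded-delay structure of Assumption \ref{assump:delay}.
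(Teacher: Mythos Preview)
Your argument is correct and in fact yields a slightly sharper bound than the lemma states: your route does not produce the term $-\frac{1}{2m}\sum_{d}\|\vx^{d+1}-\vx^d\|_\vL^2$, and since it appears with a minus sign in the lower bound, the stated inequality follows from yours. The paper takes a different path for what you call the ``far piece.'' Instead of your double-convexity trick, it splits $\vx_{i_k}^k-\vx_{i_k}$ further as $(\vx_{i_k}^k-\hat{\vx}_{i_k}^k)+(\hat{\vx}_{i_k}^k-\vx_{i_k})$, applies convexity only once (at $\hat{\vx}^k$ toward $\vx$) to get $\frac{1}{m}[f(\hat{\vx}^k)-f(\vx)]$, and then handles $\frac{1}{m}\langle\nabla f(\hat{\vx}^k),\vx^k-\hat{\vx}^k\rangle$ by constructing an explicit telescoping path $\hat{\vx}^{k,0}=\hat{\vx}^k,\hat{\vx}^{k,1},\ldots,\hat{\vx}^{k,\tau_k}=\vx^k$ through the one-block increments of \eqref{eq:relation-xk-hat} and applying the block descent lemma \eqref{eq:lip-prop} at each step. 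That is where the $\|\cdot\|_\vL^2$ delay term originates: it is the accumulated descent-lemma remainder along the path, while the $2L_r\tau$ term comes from bounding the cross-gradient differences $\langle\nabla f(\hat{\vx}^{k,j})-\nabla f(\hat{\vx}^k),\hat{\vx}^{k,j+1}-\hat{\vx}^{k,j}\rangle$. Your approach replaces this whole path construction by a second convexity inequality at $\vx^k$, which directly isolates $\langle\nabla f(\hat{\vx}^k)-\nabla f(\vx^k),\hat{\vx}^k-\vx^k\rangle$ as the only leftover; this is cleaner and avoids the extra $\vL$-norm term altogether. The paper's path argument is more explicit about how the function values bridge from $\hat{\vx}^k$ to $\vx^k$, but at the cost of one more error contribution.
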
 
\begin{proof} 
We split $\EE_{i_k}\langle \nabla_{i_k}f(\hat{\vx}^k), \vx_{i_k}^{k+1}-\vx_{i_k}\rangle$ into four terms:
\begin{align}\label{eq:crs-term}
&\EE_{i_k}\langle \nabla_{i_k}f(\hat{\vx}^k), \vx_{i_k}^{k+1}-\vx_{i_k}\rangle\cr
=&\EE_{i_k}\langle \nabla_{i_k}f({\vx}^k), \vx_{i_k}^{k+1}-\vx_{i_k}^k\rangle-\EE_{i_k}\langle \nabla_{i_k}f({\vx}^k)-\nabla_{i_k}f(\hat{\vx}^k), \vx_{i_k}^{k+1}-\vx_{i_k}^k\rangle\cr
&+\EE_{i_k}\langle \nabla_{i_k}f(\hat{\vx}^k), \vx_{i_k}^{k}-\hat{\vx}_{i_k}^k\rangle+\EE_{i_k}\langle \nabla_{i_k}f(\hat{\vx}^k), \hat{\vx}_{i_k}^k-\vx_{i_k}\rangle,
\end{align}
and we bound each of the four cross terms in \eqref{eq:crs-term}. The first is bounded in \eqref{eq:crs-t2}. 
Secondly, from the convexity of $f$, we have
\begin{equation}\label{eq:crs-t1}
\EE_{i_k}\langle \nabla_{i_k}f(\hat{\vx}^k), \hat{\vx}_{i_k}^k-\vx_{i_k}\rangle=\frac{1}{m}\langle \nabla f(\hat{\vx}^k), \hat{\vx}^k-\vx\rangle\ge\frac{1}{m}[f(\hat{\vx}^k)-f(\vx)].
\end{equation}

For the other two terms, we use the relation between $\hat{\vx}^k$ and $\vx^k$ in \eqref{eq:relation-xk-hat}. 
From the result in \cite[pp.306]{liu2015asynchronous-JMLR}, it holds that
\begin{equation}\label{eq:grad-delay}
\|\nabla f({\vx}^k)-\nabla f(\hat{\vx}^k)\|\le L_r\sum_{d\in J(k)}\|\vx^{d+1}-\vx^d\|.
\end{equation}
Hence by Young's inequality, we have for any $\alpha>0$ that
\begin{align}\label{eq:crs-t3}
&-\EE_{i_k}\langle \nabla_{i_k}f({\vx}^k)-\nabla_{i_k}f(\hat{\vx}^k), \vx_{i_k}^{k+1}-\vx_{i_k}^k\rangle\cr
\ge&-\frac{1}{2 \alpha L_c}\EE_{i_k}\| \nabla_{i_k} f({\vx}^k)-\nabla_{i_k} f(\hat{\vx}^k)\|^2 -\frac{\alpha L_c}{2}\EE_{i_k} \|\vx_{i_k}^{k+1}-\vx_{i_k}^k\|^2\cr
=&-\frac{1}{2m \alpha L_c}\| \nabla f({\vx}^k)-\nabla f(\hat{\vx}^k)\|^2- \frac{\alpha L_c}{2}\EE_{i_k}\|\vx^{k+1}-\vx^k\|^2\cr
\overset{\eqref{eq:grad-delay}}\ge& -\frac{ L_r^2|J(k)|}{2m \alpha L_c}\sum_{d\in J(k)}\|\vx^{d+1}-\vx^d\|^2- \frac{\alpha L_c}{2}\EE_{i_k}\|\vx^{k+1}-\vx^k\|^2.
\end{align}

Let $\tau_k=|J(k)|$ and order the elements in $J(k)$ as $d_1<d_2<\ldots<d_{\tau_k}$. Define $\hat{\vx}^{k,0}=\hat{\vx}^k$ and
$\hat{\vx}^{k,j}=\hat{\vx}^k+\sum_{t=1}^j(\vx^{d_t+1}-\vx^{d_t}),\, j=1,\ldots,\tau_k.$
Then we have
\begin{align}\label{eq:crs-t4-1}
&\EE_{i_k}\langle \nabla_{i_k}f(\hat{\vx}^k), \vx_{i_k}^{k}-\hat{\vx}_{i_k}^k\rangle\cr
=&\frac{1}{m} \langle \nabla f(\hat{\vx}^k), \vx^{k}-\hat{\vx}^k\rangle\cr
=&\frac{1}{m} \sum_{j=0}^{\tau_k-1}\langle \nabla f(\hat{\vx}^k), \hat{\vx}^{k,j+1}-\hat{\vx}^{k,j}\rangle\cr
=&\frac{1}{m} \sum_{j=0}^{\tau_k-1}\left[\langle \nabla f(\hat{\vx}^{k,j}), \hat{\vx}^{k,j+1}-\hat{\vx}^{k,j}\rangle-\langle \nabla f(\hat{\vx}^{k,j})-\nabla f(\hat{\vx}^k), \hat{\vx}^{k,j+1}-\hat{\vx}^{k,j}\rangle\right].
\end{align}
Since $\hat{\vx}^{k,j+1}-\hat{\vx}^{k,j}=\vx^{d_{j+1}+1}-\vx^{d_{j+1}}$, it follows from \eqref{eq:lip-prop} that
\begin{equation}\label{eq:crs-t4-2}
\langle \nabla f(\hat{\vx}^{k,j}), \hat{\vx}^{k,j+1}-\hat{\vx}^{k,j}\rangle\ge f(\hat{\vx}^{k,j+1})-f(\hat{\vx}^{k,j})-\frac{1}{2}\|\vx^{d_{j+1}+1}-\vx^{d_{j+1}}\|_\vL^2.
\end{equation}
Note $\nabla f(\hat{\vx}^{k,j})-\nabla f(\hat{\vx}^k)=\sum_{t=0}^{j-1}(\nabla f(\hat{\vx}^{k,t+1})-\nabla f(\hat{\vx}^{k,t}))$. Thus, by the Cauchy-Schwarz inequality and the Young's inequality, we have
\begin{align}\label{eq:crs-t4-3}
&\langle \nabla f(\hat{\vx}^{k,j})-\nabla f(\hat{\vx}^k), \hat{\vx}^{k,j+1}-\hat{\vx}^{k,j}\rangle\cr
\le & \sum_{t=0}^{j-1}\|\nabla f(\hat{\vx}^{k,t+1})-\nabla f(\hat{\vx}^{k,t})\|\cdot\|\hat{\vx}^{k,j+1}-\hat{\vx}^{k,j}\|\cr
\le & L_r\sum_{t=0}^{j-1}\|\hat{\vx}^{k,t+1}-\hat{\vx}^{k,t}\|\cdot\|\hat{\vx}^{k,j+1}-\hat{\vx}^{k,j}\|\cr
\le & \frac{L_r}{2}\sum_{t=0}^{j-1}\left(\|\hat{\vx}^{k,t+1}-\hat{\vx}^{k,t}\|^2+\|\hat{\vx}^{k,j+1}-\hat{\vx}^{k,j}\|^2\right).
\end{align}
Plugging \eqref{eq:crs-t4-2} and \eqref{eq:crs-t4-3} into \eqref{eq:crs-t4-1} gives
\begin{align}\label{eq:crs-t4}
&\EE_{i_k}\langle \nabla_{i_k}f(\hat{\vx}^k), \vx_{i_k}^{k}-\hat{\vx}_{i_k}^k\rangle\\
\ge&\frac{1}{m}\left[f(\vx^k)-f(\hat{\vx}^k)-\frac{1}{2}\sum_{d\in J(k)}\|\vx^{d+1}-\vx^d\|_\vL^2\right]-\frac{L_r}{2m}\sum_{j=0}^{\tau_k-1}\left(\sum_{t=0}^{j-1}\|\hat{\vx}^{k,t+1}-\hat{\vx}^{k,t}\|^2+j\|\hat{\vx}^{k,j+1}-\hat{\vx}^{k,j}\|^2\right)\nonumber
\end{align}
Noting $\tau_k\le \tau$, we have the desired result by 
plugging \eqref{eq:crs-t2}, \eqref{eq:crs-t1}, \eqref{eq:crs-t3}, and \eqref{eq:crs-t4} into \eqref{eq:crs-term}.
\end{proof}

From Lemmas \ref{lem:rlam-term}, \ref{lem:g-term}, and \ref{lem:bound-delay-term}, and also the equation \eqref{eq:term-p}, we can easily have the following result. 
\begin{align}\label{eq:one-iter-async}
&\EE_{i_k}\left[F(\vx^{k+1})-F(\vx)-\langle\vlam^{k+1}, \vr^{k+1}\rangle + (\beta-\rho)\|\vr^{k+1}\|^2-\frac{\beta}{2}\| \vr^{k+1}\|^2+\frac{1}{2}\|\vx^{k+1}-\vx\|_\vP^2\right]\cr
&+\frac{1}{2}\EE_{i_k}\|\vx^{k+1}-\vx^k\|_{\vP-\vL-\alpha L_c\vI-\beta \vA^\top \vA}^2-\frac{\kappa L_r\tau/\alpha+2 L_r\tau}{2m}\sum_{d=k-\tau}^{k-1}\|\vx^{d+1}-\vx^d\|^2-\frac{1}{2m}\sum_{d=k-\tau}^{k-1}\|\vx^{d+1}-\vx^d\|^2_\vL\cr
\le & \big(1-\frac{1}{m}\big)\left[F(\vx^k)-F(\vx)-\langle\vlam^k, \vr^k\rangle + \beta\|\vr^k\|^2\right] -\frac{\beta}{2}\|\vr^k\|^2+\frac{1}{2}\|\vx^k-\vx\|_\vP^2.
\end{align}
Regard $\vx^k=\vx^0,\,\forall k<0$. Hence,
$$\sum_{k=0}^K\sum_{d=k-\tau}^{k-1}\|\vx^{d+1}-\vx^d\|^2\le \tau\sum_{k=0}^{K-1}\|\vx^{k+1}-\vx^k\|^2.$$
Using \eqref{eq:one-iter-async} and following the same arguments in the proofs of Theorems \ref{thm:cvg-prob} and \ref{thm:rate}, we obtain the two theorems below.

\begin{theorem}[Global convergence in probability]\label{thm:async-cvg-prob}
Let $\{(\vx^k,\vr^k,\vlam^k)\}$ be the sequence generated from Algorithm \ref{alg:async-rpdc} with $0<\rho\le\frac{\beta}{m}$ and $\vP_i$'s satisfying
 \begin{equation}\label{eq:async-P}\vP_i\succeq \left(L_i+\alpha L_c+\frac{\tau L_i}{m}+\frac{(\kappa/\alpha+2) L_r\tau^2}{m}\right)\vI+\beta \vA_i^\top\vA_i,\,i=1,\ldots,m,
\end{equation}
for $\alpha>0$, then
$$F(\vx^k)\overset{p}\to F(\vx^*),\quad \|\vr^k\|\overset{p}\to 0.$$
\end{theorem}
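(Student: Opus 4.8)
The plan is to mirror the proof of Theorem \ref{thm:cvg-prob} almost verbatim, using the one-iteration inequality \eqref{eq:one-iter-async} as the async-analog of \eqref{eq:1iter-result}. First I would take expectation over both sides of \eqref{eq:one-iter-async} and sum over $k=0,\ldots,K$. As in the serial case, I would invoke the identity $F(\vx^k)-F(\vx)-\langle\vlam^k,\vr^k\rangle=\Phi(\vx^k,\vx,\vlam)+\langle\vlam-\vlam^k,\vr^k\rangle$ and apply Young's inequality to the $\vlam$-terms (together with $\vlam^{k+1}=\vlam^k-\rho\vr^{k+1}$ and $\vlam^0=\vzero$) to produce an inequality of exactly the same shape as \eqref{eq:1iter-result-exp2}. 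Two features get carried over from \eqref{eq:one-iter-async}: the per-step quadratic now reads $\|\vx^{k+1}-\vx^k\|_{\vP-\vL-\alpha L_c\vI-\beta\vA^\top\vA}^2$, and there appear two negative ``delay'' double sums of the consecutive differences $\|\vx^{d+1}-\vx^d\|^2$ over $d\in\{k-\tau,\ldots,k-1\}$, weighted by $\frac{(\kappa/\alpha+2)L_r\tau}{2m}$ in the Euclidean norm and by $\frac{1}{2m}$ in the $\vL$-norm.

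The heart of the argument—and the only genuinely new step beyond the serial proof—is absorbing these delay sums. Using the convention $\vx^k=\vx^0$ for $k<0$ and the bound $\sum_{k=0}^K\sum_{d=k-\tau}^{k-1}\|\vx^{d+1}-\vx^d\|^2\le\tau\sum_{k=0}^{K-1}\|\vx^{k+1}-\vx^k\|^2$ stated just before the theorem, each double sum collapses into a single sum over consecutive differences with an extra factor $\tau$, turning the two weights into $\frac{(\kappa/\alpha+2)L_r\tau^2}{2m}$ and $\frac{\tau}{2m}$ (in $\vL$-norm) respectively. The net coefficient of $\frac12\|\vx^{k+1}-\vx^k\|^2$ on block $i$ becomes the matrix $\vP_i-L_i\vI-\alpha L_c\vI-\beta\vA_i^\top\vA_i-\frac{(\kappa/\alpha+2)L_r\tau^2}{m}\vI-\frac{\tau L_i}{m}\vI$, which is precisely nonnegative under the stepsize condition \eqref{eq:async-P}. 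Hence all these difference terms can be dropped, exactly as $\frac12\|\vx^{k+1}-\vx^k\|_{\vP-\vL-\beta\vA^\top\vA}^2$ was dropped in the serial case. This is where I expect the main obstacle to lie: the careful bookkeeping that matches the two collapsed delay sums to the $\frac{\tau L_i}{m}$ and $\frac{(\kappa/\alpha+2)L_r\tau^2}{m}$ correction terms of \eqref{eq:async-P} and guarantees nonnegativity block by block.

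Once the delay terms are cleared, the remainder transfers mechanically from Theorem \ref{thm:cvg-prob}. Setting $(\vx,\vlam)=(\vx^*,\vlam^*)$ and using $\beta\ge m\rho$ to keep the $\|\vr^k\|^2$ and $\|\vlam-\vlam^K\|^2$ coefficients nonnegative, I obtain $\frac1m\sum_{k=1}^K\EE\Phi(\vx^k,\vx^*,\vlam^*)+\big(\frac\beta m+\frac{\rho}{2m}-\rho\big)\sum_{k=1}^K\EE\|\vr^k\|^2<\infty$ for every $K$, whence $\lim_{k\to\infty}\EE\Phi(\vx^k,\vx^*,\vlam^*)=0$ and $\lim_{k\to\infty}\EE\|\vr^k\|=0$ exactly as in \eqref{eq:convg-expect}. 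Finally, the same Markov-inequality estimates used to establish \eqref{eq:prob-F-eps} convert these expectation limits into $F(\vx^k)\overset{p}\to F(\vx^*)$ and $\|\vr^k\|\overset{p}\to 0$, completing the proof.
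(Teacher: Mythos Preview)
Your proposal is correct and follows essentially the same route as the paper: the paper itself simply states that the theorem is obtained by using \eqref{eq:one-iter-async} together with the double-sum collapse $\sum_{k=0}^K\sum_{d=k-\tau}^{k-1}\|\vx^{d+1}-\vx^d\|^2\le\tau\sum_{k=0}^{K-1}\|\vx^{k+1}-\vx^k\|^2$ and then ``following the same arguments in the proof of Theorem~\ref{thm:cvg-prob}.'' Your detailed bookkeeping---in particular matching the collapsed delay sums to the $\frac{\tau L_i}{m}$ and $\frac{(\kappa/\alpha+2)L_r\tau^2}{m}$ corrections in \eqref{eq:async-P} so that the block-wise quadratic coefficient stays positive semidefinite---is exactly the content the paper leaves implicit.
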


\begin{theorem}[Ergodic convergence rate]\label{thm:async-rate}
Under the assumptions of Theorem \ref{thm:async-cvg-prob}, let $\bar{\vx}^{K+1}=\frac{\vx^{K+1}+\sum_{k=1}^K \vx^{k+1}/m}{1+K/m}$ and 
$$C_0=\big(1-\frac{1}{m}\big)\left[F(\vx^0)-F(\vx^*)\right]+\frac{1}{2}\|\vx^0-\vx^*\|_\vP^2+\big(\frac{\beta}{2}-\frac{\beta}{m}\big)\|\vr^0\|^2.$$
Then we have the same results as those in \eqref{eq:rate-obj} and \eqref{eq:rate-feas}. 
In addition, given any $\epsilon>0$ and $0<\sigma<1$, if $K$ satisfies \eqref{eq:def-K},  
then $\bar{\vx}^{K+1}$ is an $(\epsilon, \sigma)$-solution to \eqref{eq:mb-prob}. 
\end{theorem}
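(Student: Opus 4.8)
The plan is to mirror the proof of Theorem \ref{thm:rate} almost verbatim, with the single-iteration inequality \eqref{eq:one-iter-async} playing the role that \eqref{eq:1iter-result} played in the serial analysis. The only genuinely new work is confined to disposing of the delay-induced terms $\sum_{d=k-\tau}^{k-1}\|\vx^{d+1}-\vx^d\|^2$ and $\sum_{d=k-\tau}^{k-1}\|\vx^{d+1}-\vx^d\|^2_\vL$ appearing on the left of \eqref{eq:one-iter-async}; once these are absorbed into the difference-of-iterates terms, every subsequent step carries over unchanged, and in particular the constant $C_0$ keeps its serial form, so \eqref{eq:def-K} is literally the same.

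First I would take full expectation in \eqref{eq:one-iter-async} and sum over $k=0,\ldots,K$. Using the convention $\vx^k=\vx^0$ for $k<0$ together with the double-sum bound $\sum_{k=0}^K\sum_{d=k-\tau}^{k-1}\|\vx^{d+1}-\vx^d\|^2\le \tau\sum_{k=0}^{K-1}\|\vx^{k+1}-\vx^k\|^2$, the two delay sums collapse into multiples of $\sum_k\EE\|\vx^{k+1}-\vx^k\|^2$ and $\sum_k\EE\|\vx^{k+1}-\vx^k\|^2_\vL$. Collecting these with the $\tfrac12\sum_k\EE\|\vx^{k+1}-\vx^k\|^2_{\vP-\vL-\alpha L_c\vI-\beta\vA^\top\vA}$ term already present, the net coefficient of $\|\vx^{k+1}-\vx^k\|^2$ on block $i$ becomes $\tfrac12\big(\vP_i-L_i\vI-\alpha L_c\vI-\beta\vA_i^\top\vA_i\big)-\tfrac{(\kappa/\alpha+2)L_r\tau^2}{2m}\vI-\tfrac{\tau L_i}{2m}\vI$, which is exactly the positive-semidefinite quantity guaranteed by hypothesis \eqref{eq:async-P}. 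Hence the whole aggregate of difference-of-iterates terms is nonnegative and may be dropped, leaving an inequality of precisely the same shape as the serial summed estimate \eqref{eq:1iter-result-exp}.

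From this point the argument is identical to the proofs of Theorems \ref{thm:cvg-prob} and \ref{thm:rate}. I would rewrite $F(\vx^k)-F(\vx)-\langle\vlam^k,\vr^k\rangle=\Phi(\vx^k,\vx,\vlam)+\langle\vlam-\vlam^k,\vr^k\rangle$, apply Young's inequality to the terminal multiplier term and telescope $\sum_{k=1}^K\langle\vlam-\vlam^k,\vr^k\rangle$ via $\vlam^{k}=\vlam^{k-1}-\rho\vr^{k}$, and use $\vlam^0=\vzero$ to reach the async counterpart of \eqref{eq:1iter-result-exp2}. Convexity of $F$ then yields the analogue of \eqref{eq:rate-any-lam} with the same $C_0$; specializing $(\vx,\vlam)$ exactly as in Theorem \ref{thm:rate} and combining with $\Phi(\cdot,\vx^*,\vlam^*)\ge0$ and Cauchy--Schwarz gives \eqref{eq:rate-obj} and \eqref{eq:rate-feas}, and the two Markov-inequality estimates produce the $(\epsilon,\sigma)$-bound \eqref{eq:def-K}.

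The one real obstacle is the coefficient bookkeeping in the second step: I must verify that applying $\sum_{d=k-\tau}^{k-1}\le\tau\sum$ generates exactly the extra terms $\tfrac{\tau L_i}{m}\vI$ and $\tfrac{(\kappa/\alpha+2)L_r\tau^2}{m}\vI$ that \eqref{eq:async-P} adds to the serial requirement $\vP_i\succeq L_i\vI+\beta\vA_i^\top\vA_i$, so that dropping the aggregate is justified. Here one must keep the two delay sums separate, since they originate respectively from the $-\tfrac{\kappa L_r\tau/\alpha+2L_r\tau}{2m}$ and $-\tfrac{1}{2m}$ prefactors in Lemma \ref{lem:bound-delay-term}, and confirm that the $k<0$ convention introduces no spurious boundary contribution into the telescoping. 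Once this accounting is checked, the entire serial derivation applies word for word.
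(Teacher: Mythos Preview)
Your proposal is correct and follows exactly the route the paper takes: the paper's own proof consists of the single sentence that one uses \eqref{eq:one-iter-async} together with the double-sum bound $\sum_{k=0}^K\sum_{d=k-\tau}^{k-1}\|\vx^{d+1}-\vx^d\|^2\le \tau\sum_{k=0}^{K-1}\|\vx^{k+1}-\vx^k\|^2$ and then repeats the arguments of Theorems \ref{thm:cvg-prob} and \ref{thm:rate} verbatim. Your coefficient bookkeeping showing that \eqref{eq:async-P} is precisely what is needed to drop the aggregated difference-of-iterates terms is the correct (and only) additional verification required.
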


\begin{remark}
Comparing the settings of $\vP_i$'s in Theorems \ref{thm:cvg-prob} and \ref{thm:async-cvg-prob}, we see that they are only weakly affected by the delay if $\tau=o(\sqrt{m})$, which holds for problems involving extremely many variables. If all $p$ nodes compute at the same rate, $\tau$ is in the same order of $p$ \cite{peng2016-total-async}, and thus Theorem \ref{thm:async-cvg-prob} indicates that nearly linear speed-up can be achieved on $O(\sqrt{m})$ nodes. Even without the nonseparable affine constraint, this quantity is better than that required in \cite{liu2015async-scd}. In addition, as $\tau=0$, Algorithm \ref{alg:async-rpdc} reduces to Algorithm \ref{alg:rpdc}, and their convergence results coincide. 
\end{remark}

\section{Numerical experiments}\label{sec:numerical}
In this section, we test the proposed methods on the basis pursuit problem \eqref{eq:bp}, the nonnegativity constrained quadratic programming, and also the dual SVM \eqref{eq:dsvm}. We demonstrate their efficacy by comparing to several other existing algorithms. 

\subsection{Basis pursuit}\label{sec:bp}
The tests in this subsection compare Algorithm \ref{alg:rpdc} to the linearized augmented Lagrangian method (LALM) and the open-source solver YALL1 \cite{YALL12011} on the basis pursuit problem \eqref{eq:bp}. Putting all variables into a single block, we can regard LALM as a special case of Algorithm \ref{alg:rpdc} with $m=1$, and YALL1 is a linearized ADMM with penalty parameter adaptively updated based on primal and dual residuals. 

The matrix $\vA\in\RR^{q\times 1000}$ in \eqref{eq:bp} was randomly generated with $q$ varying among $\{200, 300, 400\}$, and its entries independently follow standard Gaussian distribution. We normalized each row of $\vA$. A sparse vector $\vx^o$ was then generated with 30 nonzero entries that follow standard Gaussian distribution and whose locations are chosen uniformly at random. The vector $\vb=\vA\vx^o$. We evenly partitioned the variable $\vx$ into 100 blocks, and we set $\rho=\frac{\beta}{100}$ and $\vP_i=\beta\|\vA_i\|^2\vI,\, i=1,\ldots, 100$, where $\|\vA_i\|$ denotes the spectral norm of $\vA_i$. For LALM, we treated it as a special case of Algorithm \ref{alg:rpdc} with a single block and set $\rho=\beta$ and $\vP=\beta\|\vA\|^2\vI$. The same values of $\beta$ were used for both Algorithm \ref{alg:rpdc} and LALM. The parameters of YALL1 were set to the default values.

\begin{figure}
\begin{center}
\begin{tabular}{ccc}
$\beta=1$ & $\beta=10$ & $\beta=100$\\[0.1cm]
\includegraphics[width=0.3\textwidth]{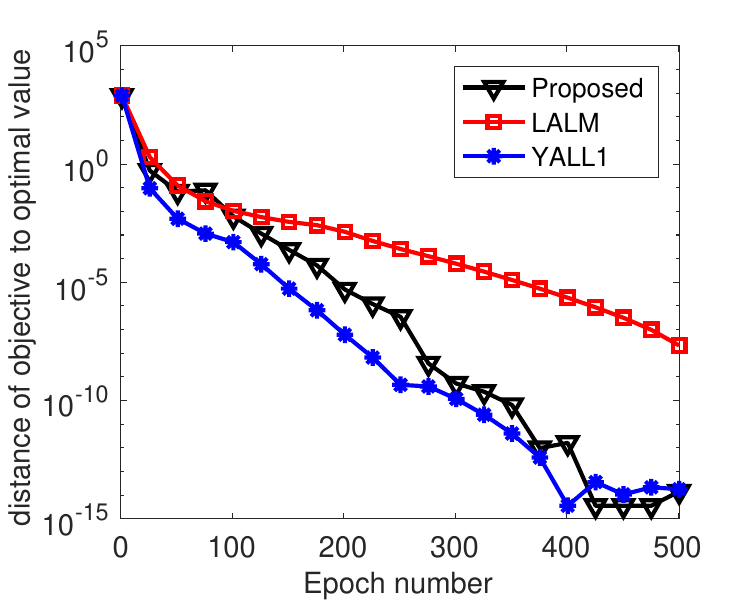}&
\includegraphics[width=0.3\textwidth]{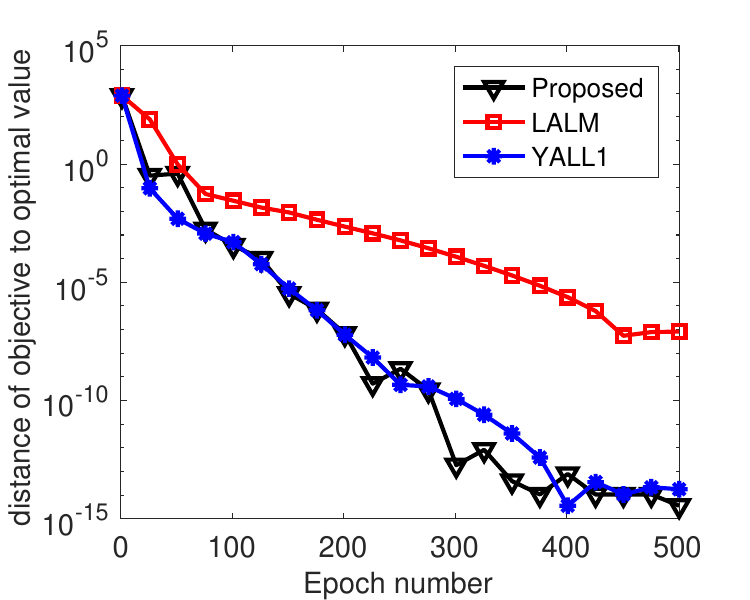}&
\includegraphics[width=0.3\textwidth]{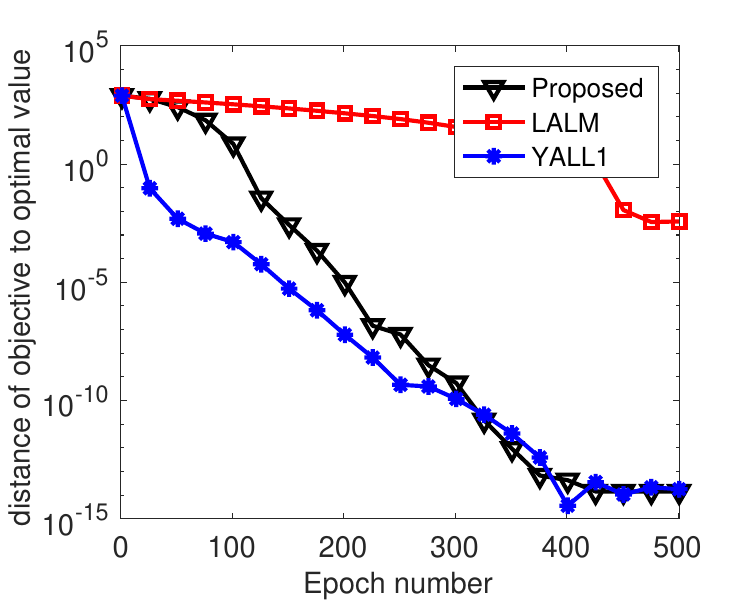}\\
\includegraphics[width=0.3\textwidth]{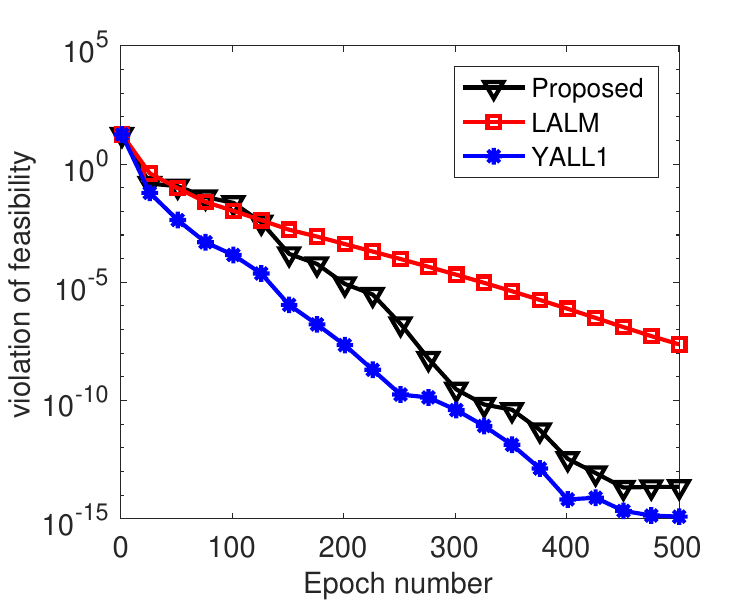}&
\includegraphics[width=0.3\textwidth]{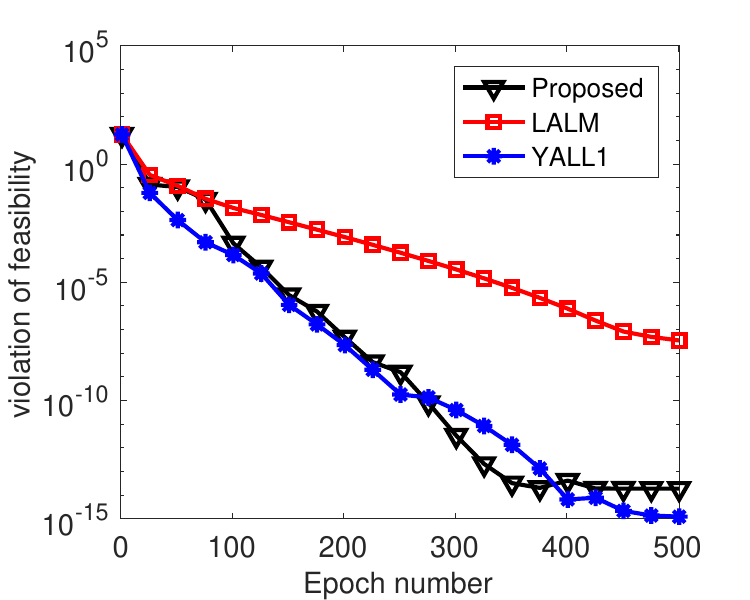}&
\includegraphics[width=0.3\textwidth]{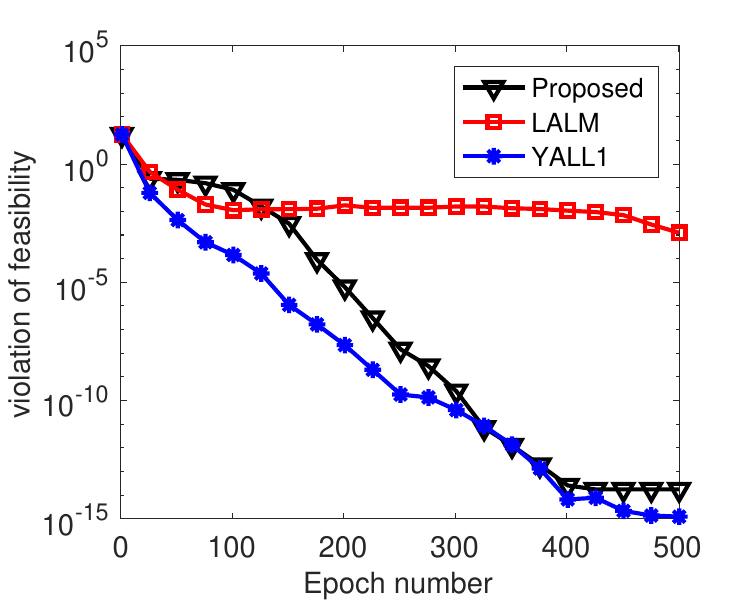}\\
\end{tabular}
\end{center}\caption{Results by three different algorithms on solving the basis pursuit problem \eqref{eq:bp} with $\vA\in\RR^{300\times 1000}$. The parameter $\beta$ varies among $\{1,10,100\}$ for Algorithm \ref{alg:rpdc} and LALM.}\label{fig:bp-beta}
\end{figure}

To compare the performance of the three algorithms, we plot their values of $|F(\vx^t)-F(\vx^*)|$ and $\|\vA\vx^t-\vb\|$ with respect to $t$, where $t$ denotes the epoch number.\footnote{Each epoch is equivalent to updating $m$ $\vx$-blocks.} Since the three algorithms have roughly the same per-epoch complexity, the plot in terms of running time will be similar. In Figure \ref{fig:bp-beta}, we fixed $q=300$ and varied $\beta$ among $\{1,10,100\}$. From the results, we see that the proposed algorithm perform significantly better than LALM and comparably as well as YALL1. In addition, the parameter $\beta$ affected both Algorithm \ref{alg:rpdc} and LALM but the former was only weakly affected. In Figure \ref{fig:bp-beta-m}, we set $\beta=\sqrt{q}$ and varied $q$ among $\{200,300,400\}$. Again we see that the proposed algorithm is significantly better than LALM. For $q=200$, Algorithm \ref{alg:rpdc} is slightly better than YALL1, and for $q=300$ and $400$, they perform equally well.

\begin{figure}
\begin{center}
\begin{tabular}{ccc}
$q=200$ & $q=300$ & $q=400$\\[0.1cm]
\includegraphics[width=0.3\textwidth]{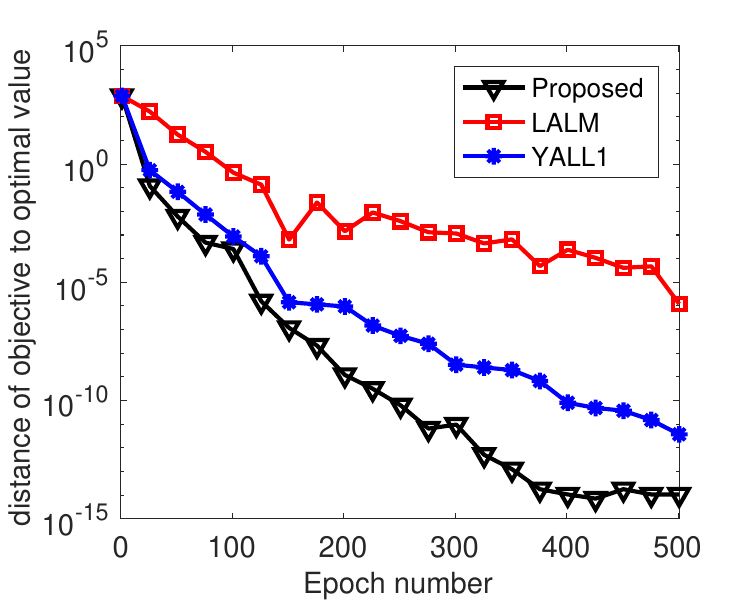}&
\includegraphics[width=0.3\textwidth]{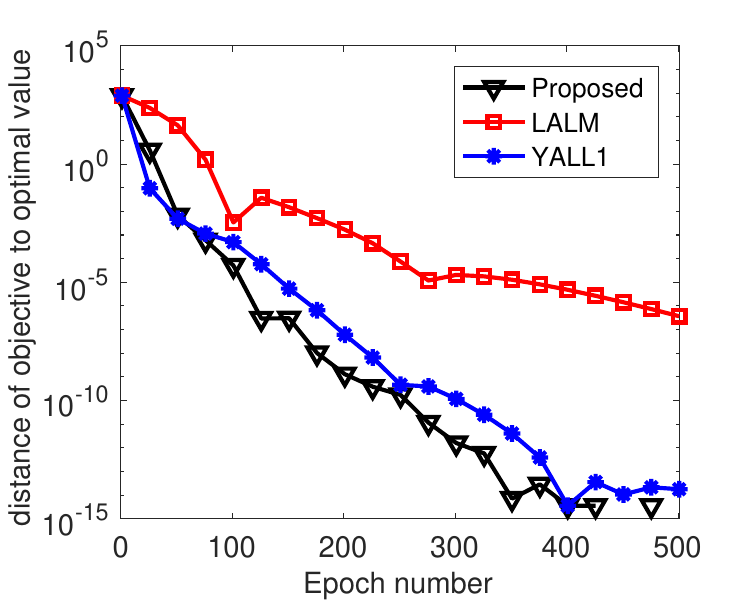}&
\includegraphics[width=0.3\textwidth]{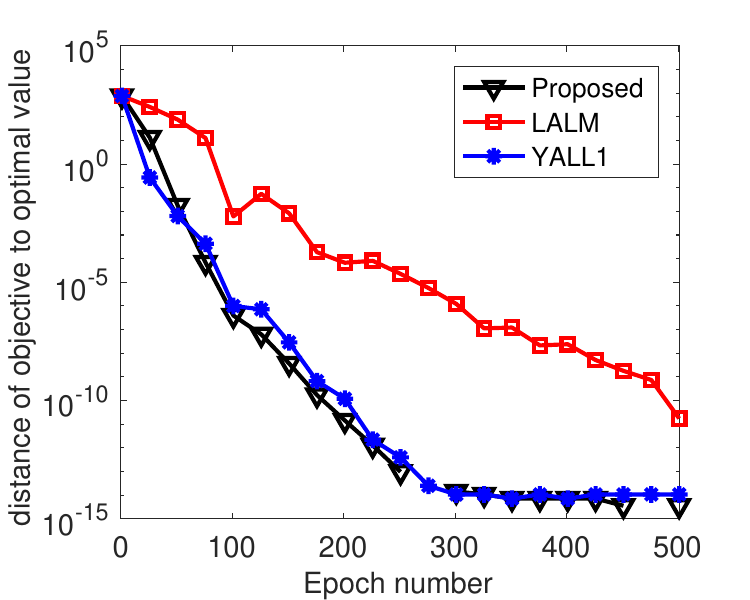}\\
\includegraphics[width=0.3\textwidth]{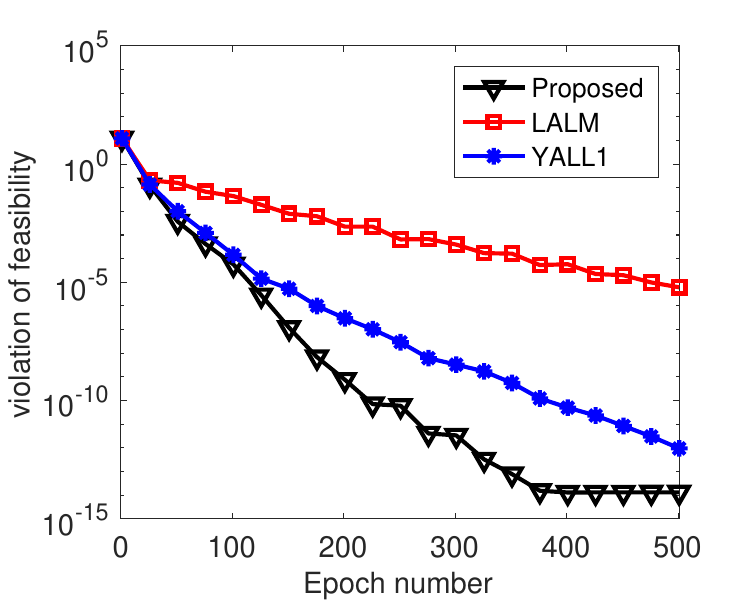}&
\includegraphics[width=0.3\textwidth]{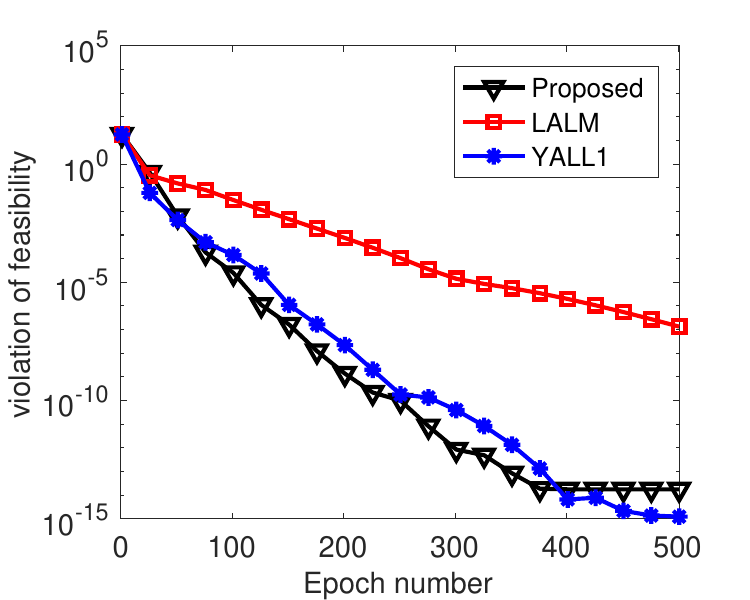}&
\includegraphics[width=0.3\textwidth]{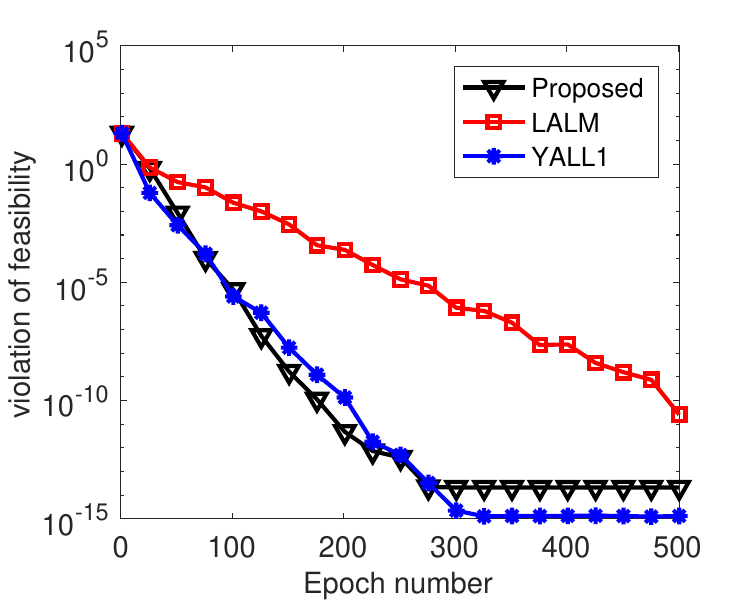}\\
\end{tabular}
\end{center}\caption{Results by three different algorithms on solving the basis pursuit problem \eqref{eq:bp} with $\vA\in\RR^{q\times 1000}$ and $q$ varying among $\{200,300,400\}$. The parameter $\beta$ was set to $\sqrt{q}$ for Algorithm \ref{alg:rpdc} and LALM.}\label{fig:bp-beta-m}
\end{figure}

\subsection{Quadratic programming}\label{sec:qp}

In this subsection, we simulate the performance of Algorithm \ref{alg:async-rpdc} with different delays on solving the nonnegativity constrained quadratic programming (NCQP): 
\begin{equation}\label{eq:ncqp}
\min_\vx \frac{1}{2}\vx^\top \vQ \vx + \vc^\top \vx, \st \vA\vx =\vb, \vx\ge 0,
\end{equation}
where $\vQ$ is a positive semidefinite matrix. We set $\vQ=\vH\vH^\top$ with $\vH\in\RR^{2000\times 2000}$ randomly generated from standard Gaussian distribution, and the vector $\vc$ was generated from Gaussian distribution. The matrix $\vA=[\vB, \vI]\in\RR^{200\times 2000}$ with the entries of $\vB$ independently following standard Gaussian distribution, and $\vb$ was generated from uniform distribution on $[0,1]$. This way, we guarantee the feasibility of \eqref{eq:ncqp}.  

\begin{figure}
\begin{center}
\begin{tabular}{cc}
\includegraphics[width=0.35\textwidth]{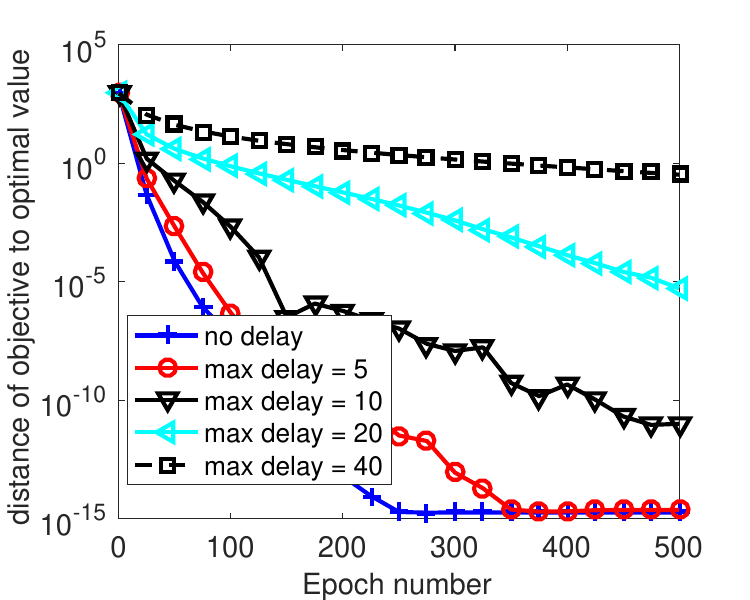}&
\includegraphics[width=0.35\textwidth]{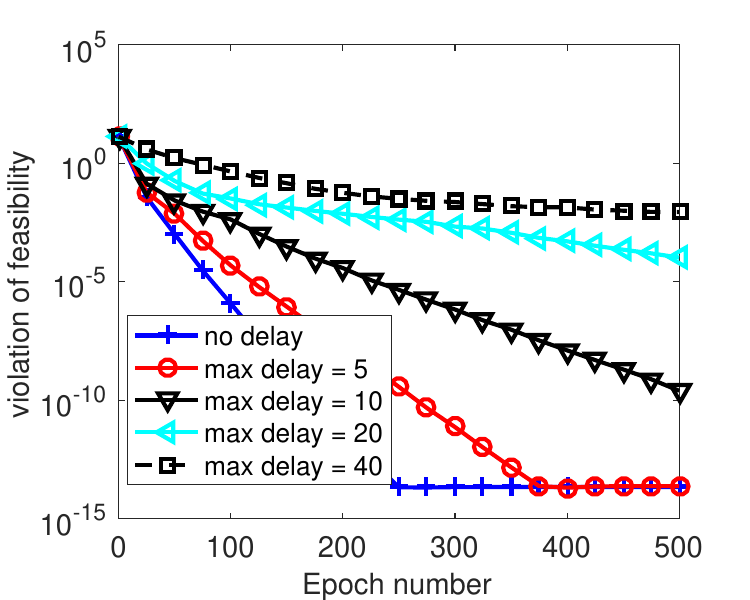}
\end{tabular}
\end{center}\caption{Results by Algorithm \ref{alg:async-rpdc} on solving the quadratic programming \eqref{eq:ncqp}. The matrices $\vP_i$'s are set according to \eqref{eq:async-P} with $\alpha=1$.}\label{fig:qp-nidstep}
\end{figure}

We partitioned $\vx$ into 2,000 blocks, namely, every coordinate was treated as one block. To see how the algorithm is affected by delayed block gradients, $\tau+1$ most recent iterates were kept, and $\hat{\vx}^k$ was set to one of these iterates that was chosen uniformly at random. We varied $\tau$ among $\{0, 5, 10, 20, 40\}$. $\beta$ was tuned to $\sqrt{2}$, $\rho=\frac{\beta}{2000}$ was used, and $\vP_i$'s were set in two different ways. Figure \ref{fig:qp-nidstep} plots the results by Algorithm \ref{alg:async-rpdc} with $\vP_i$'s set according to \eqref{eq:async-P} with $\alpha=1$. Note that for this instance, we have $L_i=Q_{ii}$, i.e., the $i$-th diagonal entry of $\vQ$ for each $i$, and $L_r=\max_i\|\vq_i\|$ where $\vq_i$ denotes the $i$-th column of $\vQ$. From the figure, we see that the convergence speed of the algorithm is affected by the delays. Larger $\tau$ gives smaller stepsize and leads to slower convergence. However, the algorithm is hardly affected by delayed block gradient if the same $\vP_i$'s were used, as shown in Figure \ref{fig:qp-idstep}. % where $\vP_i$'s were set the same for different delays. 
Practically, the maximum delay $\tau$ is unknown, but the results in Figure \ref{fig:qp-idstep} indicate that we can simply set $\vP_i$'s according to Theorem \ref{thm:cvg-prob} regardless of the delay. This implies that our analysis may not be tight. 

\begin{figure}
\begin{center}
\begin{tabular}{cc}
\includegraphics[width=0.35\textwidth]{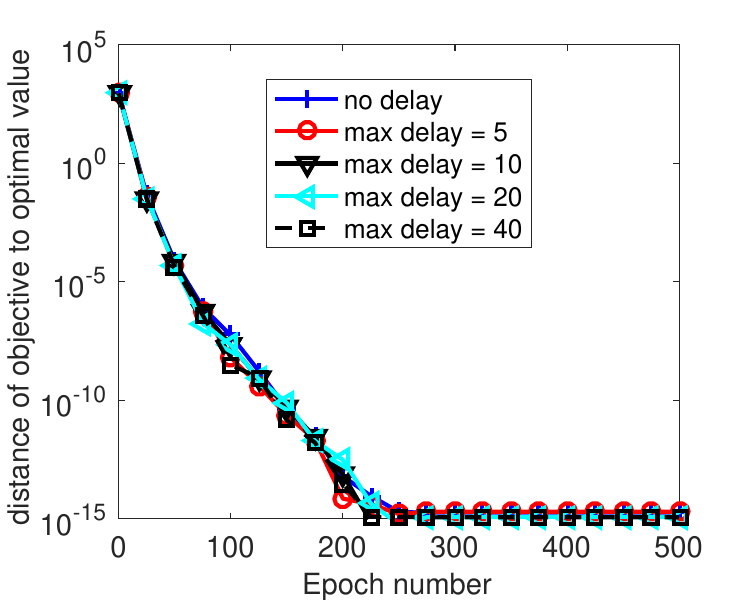}&
\includegraphics[width=0.35\textwidth]{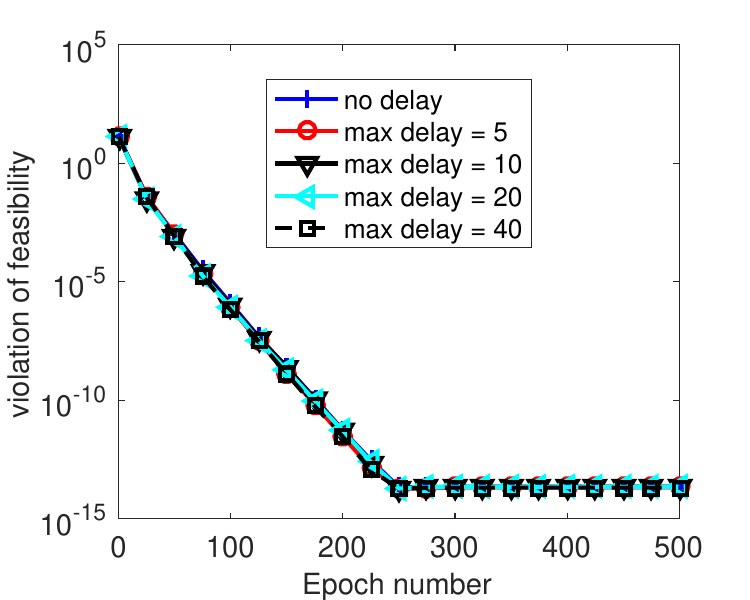}
\end{tabular}
\end{center}\caption{Results by Algorithm \ref{alg:async-rpdc} on solving the quadratic programming \eqref{eq:ncqp}. The same matrices $\vP_i$'s are used for different delays, i.e., $\vP_i= Q_{ii} + \beta \|\va_i\|^2,\, \forall i$ according to Theorem \ref{thm:cvg-prob}, where $\va_i$ is the $i$-th column of $\vA$.}\label{fig:qp-idstep}
\end{figure}

\subsection{Support vector machine}\label{sec:svm}
In this subsection, we compare the performance of the async-parallel Algorithm \ref{alg:async-rpdc} and its sync-parallel counterpart on solving the dual SVM \eqref{eq:dsvm}. Another way of parallel computing on solving \eqref{eq:dsvm} is to directly distribute computation of an algorithm (that may not be BCU type) over multiple nodes, such as the method in \cite{zeng2008fast}. In the test, we used two LIBSVM datasets:\footnote{The data can be downloaded from \url{https://www.csie.ntu.edu.tw/~cjlin/libsvmtools/datasets/}} \verb|rcv1| and \verb|news20|, whose characteristics are listed in Table \ref{table:svm-data}. 

\begin{table}
\begin{center}
{\small
\begin{tabular}{cccc}
\hline
Name & \#samples & \#features & \#nonzeros\\\hline
rcv1 & 20,242 & 47,236 & 1,498,952\\
news20 & 19,996 & 1,355,191 & 9,097,916\\\hline
\end{tabular}
}
\end{center}\caption{Characteristics of two LIBSVM datasets}\label{table:svm-data}
\end{table}

We partitioned the variable into blocks of size 50 or 51. For both sync and async-parallel methods, $\beta=0.1$ and $\rho=\frac{\beta}{m}$ were set, where $m$ is the number of blocks. As suggested in section \ref{sec:qp}, for the async-parallel method, we set $\vP_i= (L_i +\beta\|\vA_i\|^2)\vI,\,\forall i$ according to Theorem \ref{thm:cvg-prob}. For the sync-parallel method, if there are $p$ cores, we selected a set $S$ of $p$ blocks at every iteration and set $\vP_i = \sum_{j\in S}(L_j+\beta\|\vA_j\|^2)\vI$ for all $i\in S$. We also used $\vP_i$'s the same as those by the async-parallel method but noticed that the sync-parallel method diverged. The larger weight matrices are also suggested in \cite{GXZ-RPDCU2016} to be proportional to the number of blocks. Note that in the dual SVM \eqref{eq:dsvm}, if we let $\vX_i$ and $\vy_i$ contain the data points and labels corresponding to the $i$-th block variable, then $L_i$ equals the spectral norm of the matrix $\diag(\vy_i)\vX_i^\top \vX_i \diag(\vy_i)$. Since every block only has 50 or 51 coordinates, it is easy to compute $L_i$'s.

We ran the tests on a machine with 20 cores. Figure \ref{fig:svm-rcv1} plots the results by the sync and async-parallel algorithms on the \verb|rcv1| dataset. From the figure, we see that in terms of epoch number, the sync-parallel method converges slower if more cores are used, while the async-parallel one converges almost the same with different number of cores. As shown in Figure \ref{fig:svm-news20}, similar results were observed for the \verb|news20| dataset. We also measured the speed-up of the two parallel methods in terms of running time. The results are plotted in Figure \ref{fig:svm-sp}. From the results, we see that the async-parallel method achieves significantly better speed-up than the sync-parallel one, and that is because synchronization at every iteration wastes much waiting time.

\begin{figure}
\begin{center}
\begin{tabular}{cc}
\includegraphics[width=0.3\textwidth]{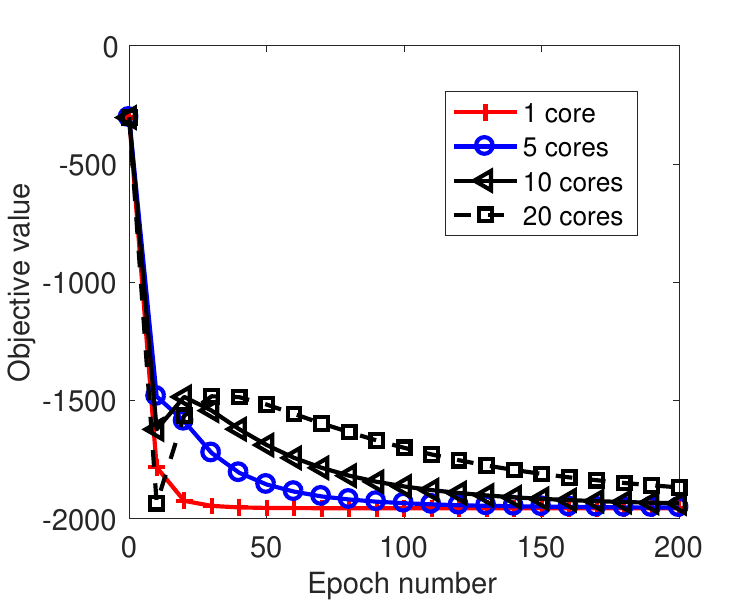}&
\includegraphics[width=0.3\textwidth]{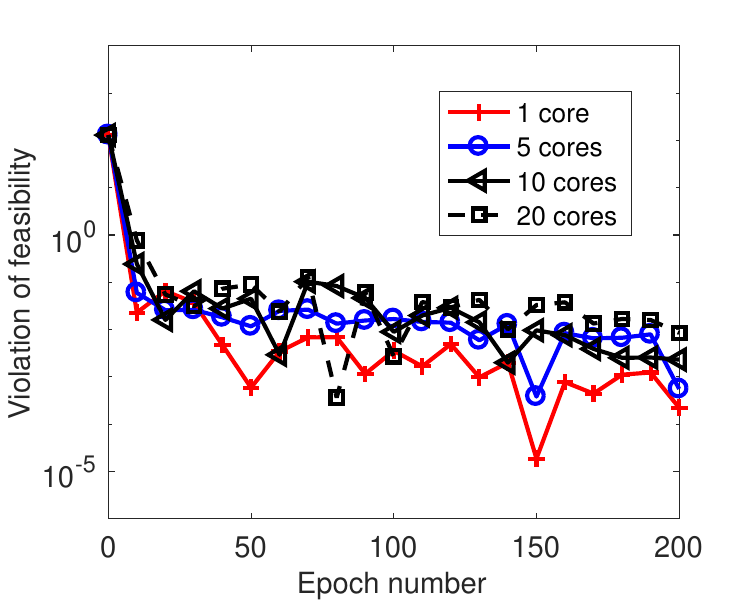}\\
\includegraphics[width=0.3\textwidth]{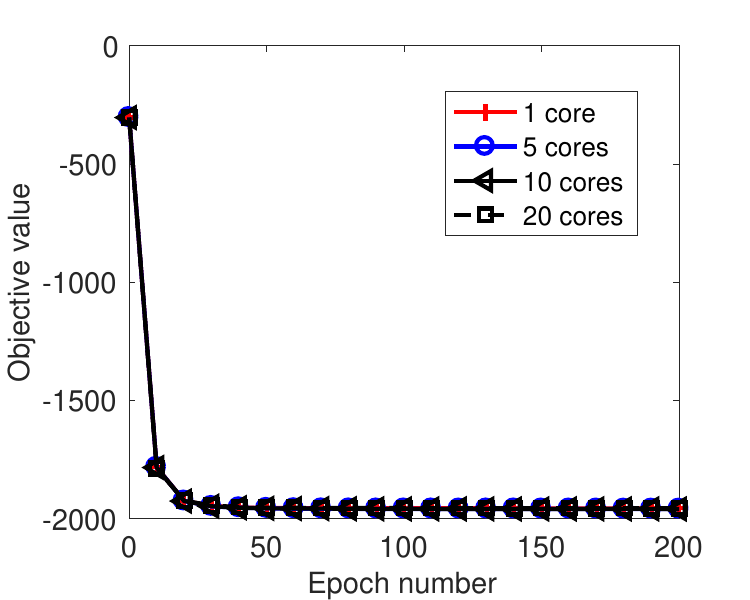}&
\includegraphics[width=0.3\textwidth]{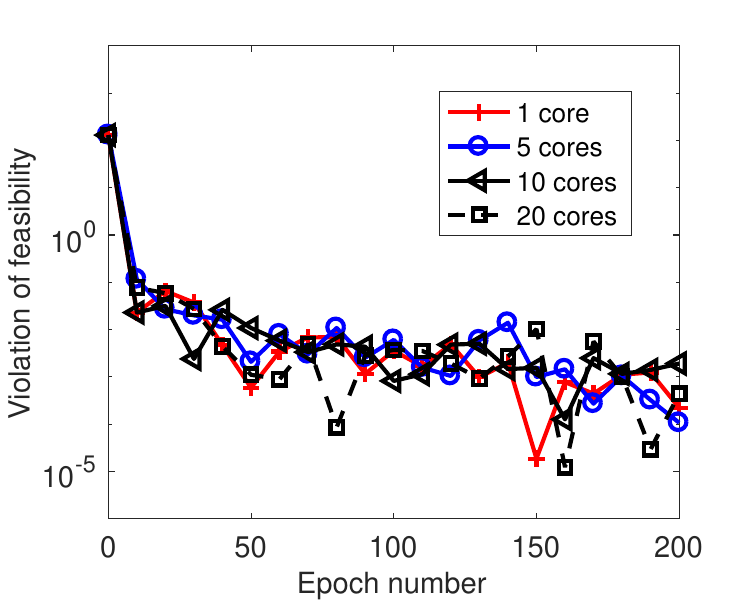}
\end{tabular}
\end{center}\caption{Results by the sync-parallel (top) and async-parallel (bottom) algorithms on solving the dual SVM \eqref{eq:dsvm}. The dataset rcv1 is used.}\label{fig:svm-rcv1}
\end{figure}

\begin{figure}
\begin{center}
\begin{tabular}{cc}
\includegraphics[width=0.3\textwidth]{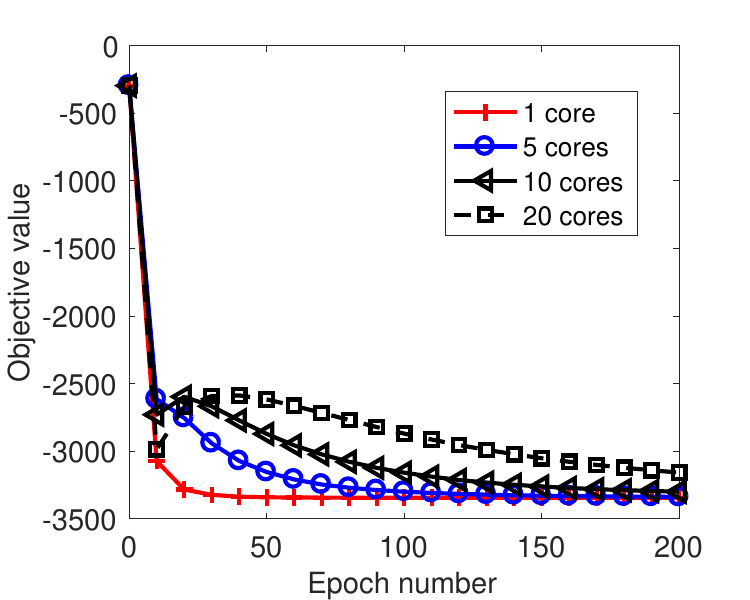}&
\includegraphics[width=0.3\textwidth]{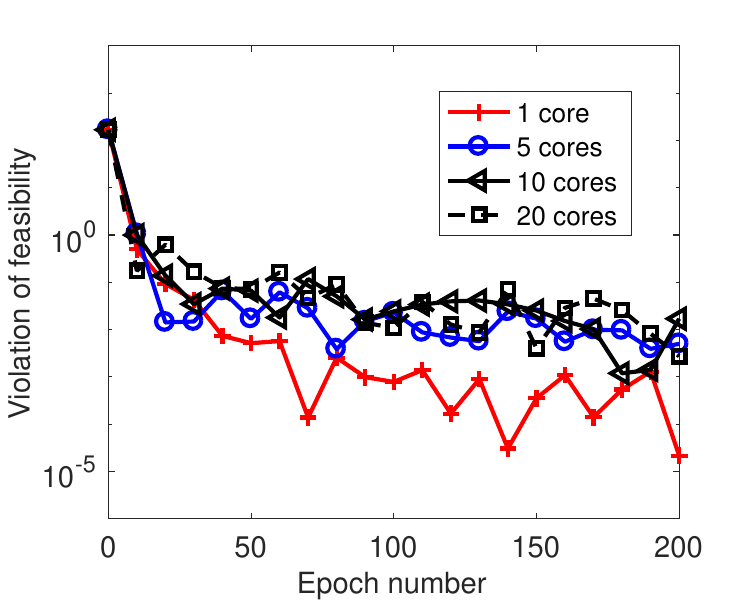}\\
\includegraphics[width=0.3\textwidth]{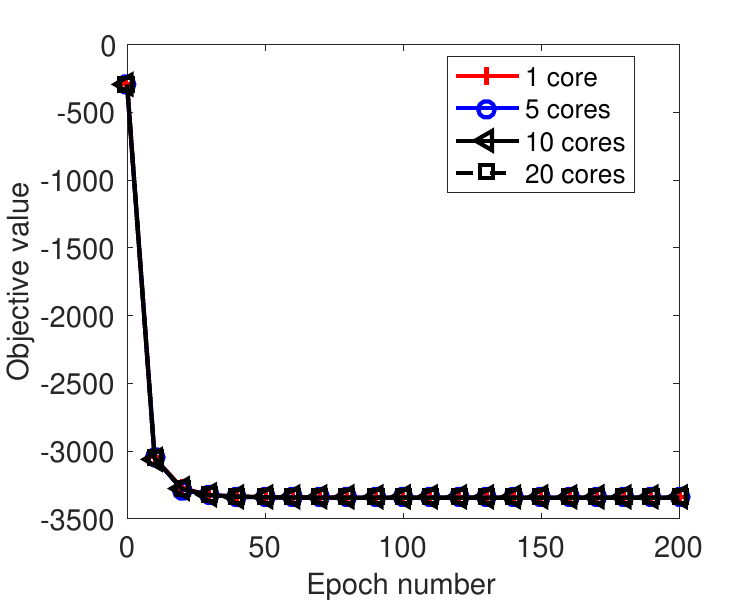}&
\includegraphics[width=0.3\textwidth]{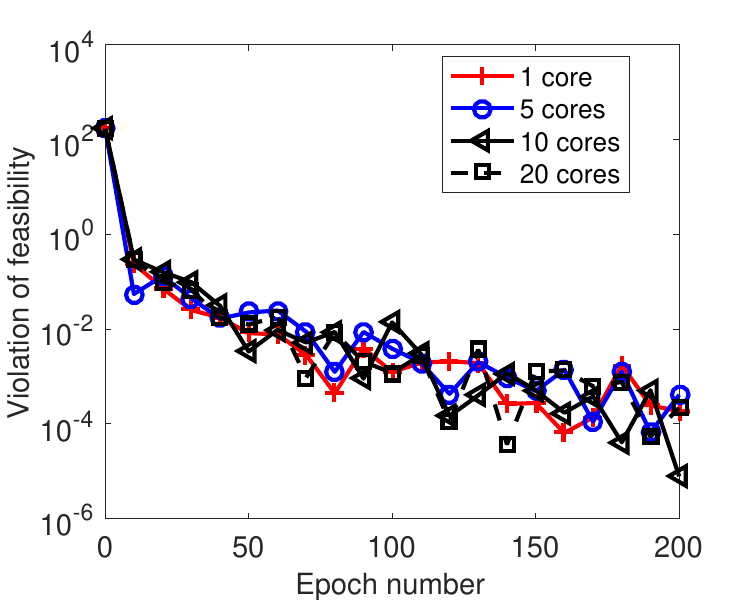}
\end{tabular}
\end{center}\caption{Results by the sync-parallel (top) and async-parallel (bottom) algorithms on solving the dual SVM \eqref{eq:dsvm}. The dataset news20 is used.}\label{fig:svm-news20}
\end{figure}

\begin{figure}
\begin{center}
\begin{tabular}{cc}
rcv1 & news20\\
\includegraphics[width=0.3\textwidth]{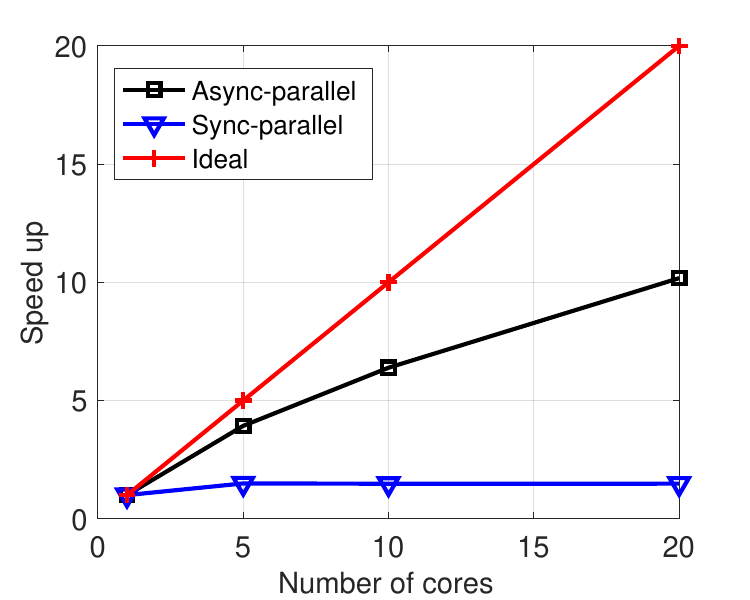}&
\includegraphics[width=0.3\textwidth]{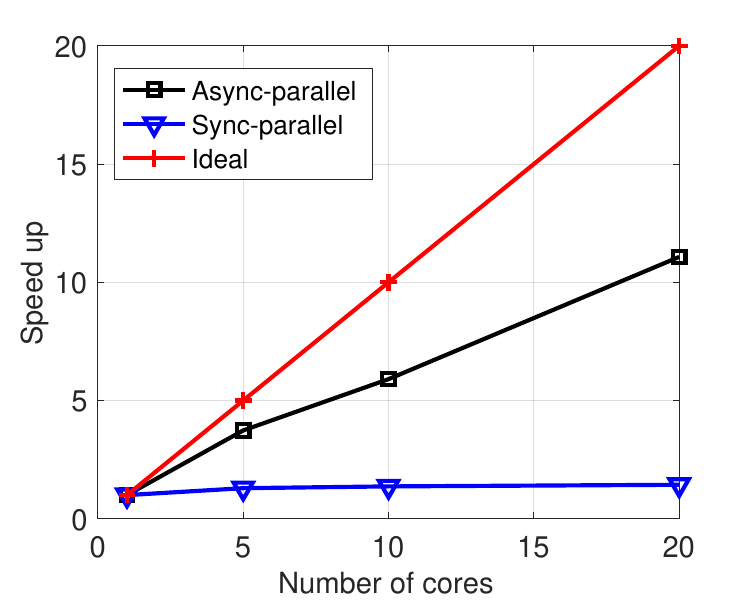}
\end{tabular}
\end{center}\caption{Speed up of the sync and async-parallel algorithms for solving the dual SVM \eqref{eq:dsvm} on different number of cores.}\label{fig:svm-sp}
\end{figure}

\section{Conclusions}\label{sec:conclusion}
We have proposed an async-parallel primal-dual BCU method for convex programming with \emph{nonseparable} objective and arbitrary linear constraint. As a special case on a single node, the method reduces to a randomized primal-dual BCU for multi-block linearly constrained problems. Convergence and also rate results in probability have been established under convexity assumption. We have also numerically compared the proposed algorithm to several existing methods. The experimental results demonstrate the superior performance of our algorithm over other ones.  

\bibliographystyle{abbrv}

\end{document}